\newtheorem{theorem}{Theorem}[section]
\newtheorem{lemma}[theorem]{Lemma}
\theoremstyle{remark}
\begin{document}
\title{ THE ASYMPTOTIC BEHAVIOR OF QUSI-HARMONIC FUNCTIONS AND EIGENFUNCTIONS OF DRIFT LAPLACIAN AT INFINITY}
\keywords{Quasi-Laplacian, singularity, asymptotic behavior.}
\thanks{\noindent \textbf{MR(2010)Subject Classification}   47F05 58C40}
\author{min chen}
\author{jiayu Li}
\author{yuchen Bi}
\address[Corresponding author] {University of Science and Technology of China, No.96, JinZhai Road Baohe District,Hefei,Anhui, 230026,P.R.China.}

\email{cmcm@mail.ustc.edu.cn}

\thanks{The research is supported by the National Nature Science Foudation of China  No. 11721101 No. 11526212 }

\pagestyle{fancy}
\fancyhf{}
\renewcommand{\headrulewidth}{0pt}
\fancyhead[CE]{}
\fancyhead[CO]{\leftmark}
\fancyhead[LE,RO]{\thepage}

\begin{abstract}
Note that $\mathbb{R}^m$ with the metric $g=e^{-\frac{|x|^2}{2(m-2)}}ds_0^2$ is actually a  Riemannian manifold with a singularity at $\infty.$ The metric  is quite singular at infinity and it is not complete. Colding-Minicozzi \cite{11} pointed out that the Ricci curvature of this metric does not have a sign and goes to negative infinity at infinity and thus there is no way to smoothly extend the metric to a neighborhood of infinity. Chen-Li \cite{7}  proved that any non-constant quasi-harmonic function or eigenfunction of drift Laplacian is discontinuous at infinity. In this paper, we show expansions of quasi-harmonic functions and of  eigenfunctions of drift-Laplacian  in terms of spherical harmonics. Using these expansions, we have a more precise description of the asymptotic behavior of quasi-harmonic functions and of eigenfunctions of drift-Laplacian at infinity. Moreover, we improve the Liouville theorem of quasi-harmonic functions and eigenfunctions of drift-Laplacian by reducing the requirement of the conditions.
\end{abstract}

\maketitle

\numberwithin{equation}{section}
\section{Introduction}
\setcounter{equation}{0}
Chen-Li \cite{7} studied eigenfunctions of Quasi-Laplacian $\Delta_g = e^{\frac{|x|^2}{2(m-2)}}(\Delta_{g_0} - \nabla_{g_0} h \cdot \nabla_{g_0}) = e^{\frac{|x|^2}{2(m-2)}} \Delta_h $ for $h = \frac{|x|^2}{4}$ and proved that any non-constant quasi-harmonic function is discontinuous at infinity in Corollary 3.3 and any non-constant eigenfunction of drift Laplacian $\Delta_h = \Delta_{g_0} - \nabla_{g_0} h \cdot \nabla_{g_0}$ is discontinuous at infinity in Theorem 1.4, which means that any quasi-harmonic function or eigenfunction of drift Laplacian could not  converge to a constant at infinity. 

Recently, Colding-Minicozzi defined a frequency function $U(r) = \frac{r}{2} (\log{I})'$ where $I(r) = r^{1-n} \int_{\partial B_r} u^2$ and used the mean value $\sqrt{I}$ of $u$ to measure the rate of the growth of eigenfunctions of drift Laplacian $\mathcal{L} = \Delta_{h}$  in Theorem 4.8 in \cite{5} and Theorem 1.1 in \cite{6}. 
\begin{theorem}\label {thm:1.4}
\cite{6} Given $\epsilon > 0$ and $\delta > 0$, there exist $r_1 > 0$ such that if $\mathcal{L}u = -\lambda u$ and $U(\bar{r}_1) \ge \delta + 2 \sup\{0, \lambda\}$ for some $\bar{r}_1 \ge r_1$, then for all $r \ge R(\bar{r}_1)$
\begin{align*}
	U(r) > \frac{r^2}{2} - n - 2\lambda - \epsilon.
\end{align*}
\end{theorem}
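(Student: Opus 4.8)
The plan is to run an Almgren-type frequency argument adapted to the Gaussian weight $e^{-h}$, $h=\frac{|x|^2}{4}$, and then to compare $U$ with a scalar ODE. Set $H(r)=\int_{\partial B_r}e^{-h}u^2$ and $D(r)=\int_{B_r}e^{-h}(|\nabla u|^2-\lambda u^2)$. Since $e^{-h}$ is constant on each sphere and $e^{-h}\mathcal{L}u=\mathrm{div}(e^{-h}\nabla u)$, integrating against $u$ over $B_r$ gives $\int_{\partial B_r}e^{-h}u\,\partial_\nu u=D(r)$; differentiating $H$ yields $H'(r)=\big(\frac{n-1}{r}-\frac r2\big)H(r)+2D(r)$, and unwinding the definition of $I$ gives $I(r)=r^{1-n}e^{r^2/4}H(r)$, hence $U(r)=rD(r)/H(r)$. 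Differentiating $U$, using $D'(r)=\int_{\partial B_r}e^{-h}\big((\partial_\nu u)^2+|\nabla^{T}u|^2-\lambda u^2\big)$, the Cauchy--Schwarz inequality $\int_{\partial B_r}e^{-h}(\partial_\nu u)^2\ge D(r)^2/H(r)$, and discarding the nonnegative tangential term, one arrives at the differential inequality
\[
U'(r)\ \ge\ \frac{U(r)}{r}\Big(\frac{r^2}{2}+2-n-U(r)\Big)-\lambda r\ =:\ F\big(r,U(r)\big),
\]
valid wherever $H(r)>0$, which (unless $u\equiv0$) is the case for all large $r$. This weighted Rellich--Pohozaev-type computation — keeping track of the drift term and of every sign — is where most of the analytic work sits.

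Next I would study the scalar equation $y'=F(r,y)$. It has two asymptotic equilibria: $y\equiv 2\sup\{0,\lambda\}$ (the frequency of the slowly growing, polynomial-type solutions), which is unstable, and $y=\varphi(r):=\frac{r^2}{2}-n-2\lambda$ (the frequency of the growing radial solution $\sim e^{r^2/4}r^{-n-2\lambda}$), which is strongly attracting; indeed $\varphi$ is the unique quadratic $\frac{r^2}{2}-\text{const}$ solving $y'=F(r,y)$ up to $O(1/r)$. Three elementary facts then drive the proof. First, with $\theta:=\delta+2\sup\{0,\lambda\}$ one has $F(r,\theta)=\frac{\delta r}{2}+O(1/r)>0$ for $r\ge r_1$, so from $U'\ge F(r,U)$ the hypothesis $U(\bar r_1)\ge\theta$ propagates to $U(r)\ge\theta$ for all $r\ge\bar r_1$ by a first-crossing argument. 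Second, for $U$ in a fixed band $[\theta,L]$ with $L$ a constant exceeding $\max\{\theta,\,2+2\lambda\}$ one has $F(r,U)\ge\frac r2\big(U-2\sup\{0,\lambda\}\big)+O(1/r)\ge\frac{\delta r}{4}$ for $r$ large, so $U$ grows at least like $\frac{\delta}{4}r$ and reaches the level $L$ at a radius depending only on $\bar r_1$ (vacuous if $U(\bar r_1)\ge L$). Third, with $s=\frac{r^2}{2}-y$ the quantity $r\big(F(r,y)-r\big)$ equals the downward parabola $-s^2+s\big(\frac{r^2}{2}+n-2\big)-\frac{r^2}{2}(n+2\lambda)$ in $s$, whose roots are $n+2\lambda+O(r^{-2})$ and $\frac{r^2}{2}-2-2\lambda+O(r^{-2})$; since $s=n+2\lambda+\epsilon$ (i.e. $y=\psi(r):=\varphi(r)-\epsilon$) and $s=\frac{r^2}{2}-L$ (i.e. $y=L$) both lie strictly between these roots for $r$ large, this parabola is bounded below by a positive multiple of $r^2$ on the intervening range, so $F(r,U)-r\ge c\,r$ ($c>0$) whenever $L\le U\le\psi(r)$ and $r$ large. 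Hence, once $U\ge L$, the gap $\psi-U$ is driven to zero within some finite radius $R(\bar r_1)$.

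Finally I would close with a barrier step. A direct computation gives $F\big(r,\psi(r)\big)-\psi'(r)=\frac{\epsilon r}{2}+O(1/r)>0$ for $r\ge r_1$, i.e. $\psi(r)=\frac{r^2}{2}-n-2\lambda-\epsilon$ is a strict subsolution of $y'=F(r,y)$. Since $U'\ge F(r,U)$, if $U$ touched $\psi$ from above at some $r_*\ge r_1$ then $U'(r_*)\le\psi'(r_*)<F\big(r_*,\psi(r_*)\big)\le U'(r_*)$, a contradiction; hence once $U\ge\psi$ at one radius it stays strictly above $\psi$ thereafter. Combined with the third point above, this gives $U(r)>\frac{r^2}{2}-n-2\lambda-\epsilon$ for all $r\ge R(\bar r_1)$, as claimed. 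The genuinely delicate parts I expect to be: (i) pinning down all weight factors and signs in the differential inequality for $U$; and (ii) the bookkeeping of the ODE analysis — fixing $r_1$ and the catch-up radius $R(\bar r_1)$ uniformly and checking that the band estimates survive both $\lambda\ge0$ and $\lambda<0$ (which only shifts constants, since the statement is already phrased through $\sup\{0,\lambda\}$). The unique-continuation remark needed to ensure $H(r)>0$ for large $r$ is routine.
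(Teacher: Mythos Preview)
The paper does not prove this theorem at all: it is quoted verbatim from Colding--Minicozzi \cite{6} as background motivation in the introduction, and no proof (or even sketch) is given anywhere in the paper. There is therefore nothing in the paper to compare your proposal against.

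For what it is worth, your outline is the standard Colding--Minicozzi frequency approach and is essentially the argument of \cite{6} itself: derive the differential inequality for $U$ from the first-variation/Rellich identity with Gaussian weight, then run an ODE comparison against the attracting branch $\varphi(r)=\frac{r^2}{2}-n-2\lambda$. So your proposal is on the right track relative to the original source, but it cannot be graded against the present paper, which simply cites the result.
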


$U(r) > \frac{r^2}{2} - n - 2\lambda - \epsilon$ implies that  $\sqrt{I} \ge Ce^{\frac{r^2}{4}} r^{-(n + 2\lambda - \epsilon)}$. Theorem 1.4 shows that there is a sharp dichotomy for the growth of eigenfunctions of $\mathcal{L}$: either $\sqrt{I} \le Cr^{2(\delta + 2\lambda)}$, $u$ grows at most polynomially; or $\sqrt{I} \ge Ce^{\frac{r^2}{4}} r^{-(n + 2\lambda - \epsilon)}$, $u$ grows at least like $Ce^{\frac{r^2}{4}} r^{-(n + 2\lambda - \epsilon)}$. They use $\sqrt{I}$ to describe the asymptotic behavior of $u$. In this paper, we give the expansion of $u$ in terms of spherical harmonics in Lemma 3.2 to see its asymptotic behavior directly.

To know the sharp dichotomy phenomenon of the growth rate, we compare the expansion of quasi-harmonic function in Lemma 2.3 with the expansion harmonic function. 

Assume $\varphi_k(\theta)$ is an eigenfunction of $L^2(S^{m-1})$ corresponding to the eigenvalue $\lambda_k$ and $C(N)$ is an Euclidean Cone $(0,\infty)\times N^{m-1}.$  
\begin{theorem}\label {thm:1.5}
\cite{10} If $u$ is a harmonic function on $C(N)$, then
\[ u(r,\theta) = \sum c_k r^{p_k} \varphi_k(\theta),\]
where the $c_k$ are constants and $p_k = \frac{-(m-2) + \sqrt{(m-2)^2 + 4\lambda_k}}{4}$ increases strictly from $0$ to $+\infty$ as $k\rightarrow +\infty$. Furthermore, $u$ has polynomial growth if and only if this is a finite sum.
\end{theorem}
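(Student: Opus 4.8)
The plan is to prove Theorem~\ref{thm:1.5} by separation of variables adapted to the cone. Write the Euclidean cone metric on $C(N)=(0,\infty)\times N^{m-1}$ as $dr^2+r^2 g_N$, so that
\[
\Delta = \partial_r^2 + \frac{m-1}{r}\,\partial_r + \frac{1}{r^2}\,\Delta_N .
\]
Since $N$ is closed, $-\Delta_N$ admits a complete $L^2(N)$-orthonormal system of eigenfunctions $\{\varphi_k\}$ with eigenvalues $0=\lambda_0<\lambda_1\le\lambda_2\le\cdots\to+\infty$. For each fixed $r$ put $u_k(r)=\int_N u(r,\theta)\varphi_k(\theta)\,dV_N(\theta)$; a first, routine step is to invoke interior elliptic estimates for $\Delta u=0$ to see that each $u_k\in C^\infty((0,\infty))$ and that differentiation under the integral sign is legitimate.

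Next I would extract the radial ODE: pairing $\Delta u=0$ with $\varphi_k$ in $L^2(N)$ and integrating by parts in $\theta$ gives
\[
u_k'' + \frac{m-1}{r}\,u_k' - \frac{\lambda_k}{r^2}\,u_k = 0 ,
\]
an Euler equation whose indicial polynomial is $t^2+(m-2)t-\lambda_k$, with roots $p_k=\frac{-(m-2)+\sqrt{(m-2)^2+4\lambda_k}}{2}$ and $q_k=\frac{-(m-2)-\sqrt{(m-2)^2+4\lambda_k}}{2}<p_k$. Hence $u_k(r)=c_k r^{p_k}+d_k r^{q_k}$, with a $\log r$ term only in the borderline case $\lambda_k=0,\ m=2$. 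Since a harmonic function on $C(N)$ is in particular bounded near the vertex $r=0$ and $q_k<0$ for every $k$ (indeed $q_0=-(m-2)<0$ when $m\ge 3$), all the singular coefficients must vanish: $d_k=0$ for every $k$. This produces the expansion $u(r,\theta)=\sum_k c_k r^{p_k}\varphi_k(\theta)$, and $p_k$ is increasing with $p_0=0$ and $p_k\to+\infty$ because $\lambda_k\to+\infty$.

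For the polynomial-growth characterization: if the sum is finite, $u$ visibly has polynomial growth. Conversely, if $|u(r,\theta)|\le C(1+r^d)$ for all $(r,\theta)$, then Cauchy--Schwarz on $N$ gives $|c_k|\,r^{p_k}=|u_k(r)|\le C'(1+r^d)$ for all $r$; letting $r\to+\infty$ forces $c_k=0$ whenever $p_k>d$, and since $p_k\to+\infty$ only finitely many $c_k$ survive, so $u$ is a finite sum.

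The main obstacle is the convergence and regularity bookkeeping underlying all of this: one must show that the formal series $\sum_k c_k r^{p_k}\varphi_k(\theta)$ genuinely reproduces $u$. It converges to $u$ in $L^2(N)$ for each fixed $r$ by construction, but to differentiate it termwise and to evaluate it pointwise one needs uniform convergence, together with all derivatives, on compact subsets of $C(N)$. I would get this from the standard eigenfunction sup-estimates $\|\varphi_k\|_{C^j}\le C_j\lambda_k^{N_j}$ for suitable powers $N_j$, Weyl's law (so $\lambda_k$ grows polynomially in $k$), and the decay of the coefficients $c_k$ forced by $u\in L^2$ on each annulus combined with interior elliptic estimates on $B_{2r}\setminus B_{r/2}$. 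Once this is set up, the Euler-equation analysis and the growth argument above go through with no further difficulty.
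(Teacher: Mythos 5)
This theorem is quoted from Colding--Minicozzi \cite{10}; the paper gives no proof of it, so there is no internal argument to compare against. Your separation-of-variables proof is the standard one, and it is exactly the scheme the paper itself runs in Section 2 for the quasi-harmonic case: project onto $\varphi_k$, solve the radial ODE, discard the singular branch, and re-sum with the usual eigenfunction and Weyl-law estimates. Two points need attention. First, the step ``a harmonic function on $C(N)$ is in particular bounded near the vertex'' is not free: as written, $C(N)=(0,\infty)\times N^{m-1}$ excludes the vertex, and $r^{q_k}\varphi_k(\theta)$ is harmonic on that punctured cone, so the expansion containing only the $r^{p_k}$ branch is simply false without some hypothesis at $r=0$. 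You must either read ``harmonic on the cone'' as harmonic on the completed cone (continuity at the vertex, or local boundedness together with a removable-singularity argument), or state that hypothesis explicitly; note that the paper handles the analogous point in Lemma 2.1 in precisely this way, excluding the second Kummer branch by the initial condition $f_k(0)=\langle u(0),\varphi_k\rangle=0$. Second, your indicial roots are $p_k=\frac{-(m-2)+\sqrt{(m-2)^2+4\lambda_k}}{2}$, with denominator $2$, not the denominator $4$ appearing in the statement. Your value is the correct one for the cone expansion (for $N=S^{m-1}$, $\lambda_k=k(k+m-2)$ gives $p_k=k$); the $4$ in the paper is a slip carried over from its quantity $l_k$, which is defined after the substitution $z=r^2$, so that the true radial exponent there is $2l_k$. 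With the vertex hypothesis made explicit, your convergence bookkeeping and the Cauchy--Schwarz argument for the polynomial-growth dichotomy go through as you describe.
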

We know that the property of quasi-harmonic  and harmonic functions are quiet different.
Let us recall some basic results about the metric $g=e^{-\frac{|x|^2}{2(m-2)}}ds_0^2.$ Lin-Wang \cite{1}  introduced the quasi-harmonic sphere, which is a harmonic map from $M=\big(\mathbb{R}^m, e^{-\frac{|x|^2}{2(m-2)}}ds_0^2\big)$ to $N$ with finite energy when they study the regularity of the heat flow of harmonic maps(c.f.\cite{12}). Here $ds_0^2$ is Euclidean metric in $\mathbb{R}^m$.  Colding-Minicozzi \cite{11} also pointed out  self-shrinkers in $\mathbb{R}^{m-1}$ are minimal hypersurfaces for the metric $g=e^{-\frac{|x|^2}{2(m-2)}}ds_0^2.$ Note that $\mathbb{R}^m$ with this metric is actually a  Riemannian manifold with a singularity at $\infty.$ The compactification of $\mathbb{R}^m$ provided by this metric is a topological $m-$sphere. Colding-Minicozzi \cite{11} mentioned that the Ricci curvature of this metrics does not have a sign and goes to negative infinity at infinity and thus there is no way to smoothly extend the metric to a neighborhood of infinity. The metric $g=e^{-\frac{|x|^2}{2(m-2)}}ds_0^2$ is quite singular at infinity and it is not complete.  Ding-Zhao \cite{2} showed that if the target $N$ is a sphere, any equivariant quasi-harmonic sphere is discontinuous at infinity and conjectured that any non-constant quasi-harmonic sphere is discontinuous at infinity. 

In this paper, we will give a more precise description of the behavior of quasi-harmonic function and eigenfunctions of $\Delta_h$ near the infinity.

Assume $u_0 = \frac{1}{2} \int_{0}^{r} e^{\frac{r^2}{4}} r^{1-m} d r,$ which is an radially symmetric solution of $\Delta_g u = 0$, we will show that $\frac{u(r,\theta)}{u_0(r)}$ could be asymptotic to any given function $g(\theta) \in  H^{[\frac{m}{2}]+2}(S^{m-1})$ from the following result.
\begin{theorem}\label {thm:1.6}
Assume $\bar{M} = (\mathbb{R}^m, g)$, for any given function $g(\theta) \in  H^{[\frac{m}{2}]+2}(S^{m-1})$, there exists a quasi-harmonic function $u(r,\theta)$ (i.e., $\Delta_g u = 0$) on $\mathbb{R}^m\backslash \{0\}$ such that $\lim\limits_{r \to +\infty} \frac{u(r,\theta)}{u_0(r)} = g(\theta)$. Moreover, if $\frac{u(r,\theta)}{u_0(r)}$ and $\frac{\bar{u}(r,\theta)}{u_0(r)}$ are asymptotic to the same function $g(\theta)$, then $u(r,\theta) = \bar{u}(r, \theta) + c.$
\end{theorem}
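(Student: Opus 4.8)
The plan is to separate variables. Since $\Delta_g=e^{\frac{|x|^2}{2(m-2)}}\Delta_h$ with $h=\tfrac{|x|^2}{4}$, the equation $\Delta_gu=0$ is equivalent to $\Delta_hu=0$, i.e.\ in polar coordinates $u_{rr}+\big(\tfrac{m-1}{r}-\tfrac r2\big)u_r+\tfrac{1}{r^2}\Delta_{S^{m-1}}u=0$. Projecting onto the degree-$k$ spherical harmonics ($-\Delta_{S^{m-1}}$-eigenvalue $\lambda_k=k(k+m-2)$), the radial factor $a_k$ of each mode satisfies
\[
a_k''+\Big(\frac{m-1}{r}-\frac r2\Big)a_k'-\frac{\lambda_k}{r^2}a_k=0 ,
\]
and the substitution $t=\tfrac{r^2}{4}$, $a_k=t^{k/2}b_k$ turns this into Kummer's confluent hypergeometric equation $tb_k''+(k+\tfrac m2-t)b_k'-\tfrac k2b_k=0$. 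Its solution space is spanned by $\psi_k(r):=t^{k/2}M(\tfrac k2,k+\tfrac m2,t)$ and $\chi_k(r):=t^{k/2}U(\tfrac k2,k+\tfrac m2,t)$, and the classical asymptotics of $M$ and $U$ give the dichotomy I will exploit: $\psi_k$ is regular at the origin ($\psi_k\sim\mathrm{const}\cdot r^k$) and $\psi_k(r)/u_0(r)\to L_k$ as $r\to\infty$ for an explicit positive constant $L_k$; while $\chi_k$ is bounded at infinity but singular at the origin, $\chi_k\sim\mathrm{const}\cdot r^{\,2-m-k}$. For $k=0$ the two roles are played by $u_0$ itself (fast) and the constant (slow).

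For the existence statement, given $g=\sum_kP_kg$ (spherical-harmonic decomposition), set
\[
u(r,\theta)=(P_0g)\,u_0(r)+\sum_{k\ge1}\frac{1}{L_k}\,\psi_k(r)\,(P_kg)(\theta).
\]
Two estimates drive the convergence: Stirling shows $L_k$ grows super-exponentially (of order $(2k/e)^{k/2}$), whereas for $r$ in a fixed annulus $R^{-1}\le r\le R$ the Kummer series, whose coefficients are dominated by $1/n!$, gives $0\le\psi_k(r)\le e^{R^2/4}(r^2/4)^{k/2}$ uniformly in $k$. Hence each term is $O\big((er^2/8k)^{k/2}\|P_kg\|_{L^\infty}\big)$, so the series converges absolutely and locally uniformly on $\mathbb{R}^m\setminus\{0\}$; being a locally uniform limit of quasi-harmonic functions, $u$ is quasi-harmonic on $\mathbb{R}^m\setminus\{0\}$ by elliptic regularity. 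For the asymptotics, $\tfrac{u}{u_0}-g=\sum_{k\ge1}\big(\tfrac{\psi_k(r)}{L_ku_0(r)}-1\big)P_kg$ and each summand tends to $0$ as $r\to\infty$; to pass the limit under the sum I will establish a uniform bound $|\psi_k(r)/(L_ku_0(r))|\le C$ for all $r\ge R_1$ and all $k$ — conveniently read off from the Sturm--Liouville form $(r^{m-1}e^{-r^2/4}a_k')'=\lambda_k r^{m-3}e^{-r^2/4}a_k$ together with the behaviour of $\psi_k$, $u_0$, $\chi_k$ at infinity — and combine it with $\sum_k\|P_kg\|_{L^\infty(S^{m-1})}<\infty$, which holds since $g\in H^{[m/2]+2}(S^{m-1})$ and $[m/2]+2>\tfrac{m-1}{2}$. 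Dominated convergence then gives $\lim_{r\to\infty}u/u_0=g$.

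For uniqueness, put $w=u-\bar u$, so that $\Delta_gw=0$ and $w/u_0\to0$. Each radial mode $b_k$ of $w$ satisfies the mode-$k$ ODE, and $w/u_0\to0$ forces $b_k/u_0\to0$ modewise; by the dichotomy this gives $b_0\equiv\mathrm{const}$ and $b_k=\beta_k\chi_k$ for $k\ge1$. Since $\chi_k$ is singular of order $r^{2-m-k}$ at the origin while $u_0$ is singular only of order $r^{2-m}$, a nonzero $\beta_k$ would make the degree-$k$ angular component of $w/u_0$ blow up like $r^{-k}$ as $r\to0$; but $w/u_0$ stays bounded near the origin (as $u/u_0$ and $\bar u/u_0$ do — this holds for the constructed solution, where $u/u_0\to P_0g$ as $r\to0$), so $\beta_k=0$ for every $k\ge1$ and hence $w\equiv b_0$ is constant, i.e.\ $u=\bar u+c$.

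The conceptual core is the reduction to Kummer's equation and the $\psi_k$ versus $\chi_k$ dichotomy. I expect the main obstacle to lie in the uniqueness half — establishing (or correctly formulating the hypothesis that yields) the boundedness of $u/u_0$ near the origin, which is precisely what eliminates the singular modes $\chi_k$ — together with, in the existence half, the uniform-in-$k$ control of $\psi_k$ and of the ratio $\psi_k/(L_ku_0)$ that is needed to justify the term-by-term passage to the limit.
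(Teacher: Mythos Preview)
Your route is the paper's route: separate variables, reduce each radial mode to Kummer's equation, and build $u$ as a spherical-harmonic superposition of the ``fast'' radial profiles. For existence the paper obtains its uniform-in-$k$ control not through your Stirling/series bounds but via a heavier Mellin--Barnes contour-integral argument (Lemma~2.2): it writes $F[a,b;-x]=x^{-a}\tfrac{\Gamma(b)}{\Gamma(b-a)}\bigl(1+g_k(x)\bigr)$ and proves $|g_k(x)|\le C(m,\delta)\,l_k^{2\delta}x^{-\delta}$ for large $k$, which together with the Fourier-coefficient decay $|\langle g,\varphi_k\rangle|\le C\lambda_k^{-n}$ (Lemma~2.4) yields both the locally uniform convergence of $\sum\bar f_k\varphi_k$ and the termwise passage to the $r\to\infty$ limit. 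Your lighter estimates are aimed at the same inequalities and should work; the paper's machinery just gives a sharper remainder.

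The substantive divergence is in the uniqueness half, and here your worry is well founded. The paper never confronts the $\chi_k$ solutions: its Lemma~2.3 asserts that \emph{every} quasi-harmonic $u$ on $\mathbb{R}^m\setminus\{0\}$ already has the form $c+C_0\int_0^r e^{s^2/4}s^{1-m}\,ds+\sum_{k\ge1}C_k e^{r^2/4}r^{-m}(1+g_k)\varphi_k$, i.e.\ only fast modes, after which $C_k=\lim_{r\to\infty}f_k/u_0=\langle g,\varphi_k\rangle=\bar C_k$ gives $u-\bar u=c$ in one line. But Lemma~2.3 rests on Lemma~2.1, whose proof discards the second Frobenius solution by imposing $f_k(0)=\langle u(0),\varphi_k\rangle=0$ --- a condition that presupposes $u$ extends continuously to the origin. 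So the paper is tacitly proving uniqueness among solutions regular at $0$, exactly the extra hypothesis you were looking for. On $\mathbb{R}^m\setminus\{0\}$ as literally stated, $u$ and $u+\chi_k\varphi_k$ have the same limit $g$ at infinity (since $\chi_k/u_0\to0$) yet differ by a nonconstant, so the uniqueness clause cannot hold without some condition at the origin. Your proposed fix --- assume $u/u_0$ bounded as $r\to0$, then kill $\chi_k$ by its $r^{2-m-k}$ singularity --- is precisely equivalent to what the paper uses implicitly; the cleanest resolution is to add that hypothesis (or ``$u$ extends across $0$'') to the uniqueness statement rather than attempt to derive it.
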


Similarly, assume $u_0 = e^{\frac{r^2}{4}} r^{-(m+2\lambda)}$, we have
\begin{theorem}\label {thm:1.7}
If $ 2\lambda$ is not an integer, for any given function $g(\theta) \in  H^{[\frac{m}{2}]+2}(S^{m-1})$, there exists an eigenfunction $u(r,\theta)$ of $\Delta_h$ on $\mathbb{R}^m\backslash \{0\}$ such that $\lim\limits_{r \to +\infty} \frac{u(r,\theta)}{u_0(r)} = g(\theta)$. Moreover, if $\frac{u(r,\theta)}{u_0(r)}$ and $\frac{\bar{u}(r,\theta)}{u_0(r)}$ are asymptotic to the same function $g(\theta)$, then $u(r,\theta) = \bar{u}(r, \theta)+p(r)$, where $p(r) \sim r^{2\lambda}$. If $\lambda$ is an integer, for any given function $g(\theta) \in  H^{[\frac{m}{2}]+2}(S^{m-1})$ satisfying that $\langle g, \varphi_k \rangle = 0$ when $k \in \{0, m-2, m, m+2, \cdots, m+2\lambda\} = A$. Then there exists an eigenfunction $u(r, \theta)$ of $\Delta_h$ on $\mathbb{R}^m\backslash \{0\}$ such that  $\lim\limits_{r \to +\infty} \frac{u(r,\theta)}{u_0(r)} = g(\theta)$. If $2\lambda$ is an integer and $\lambda$ is not an integer, for any given function $g(\theta) \in  H^{[\frac{m}{2}]+2}(S^{m-1})$ satisfying that $\langle g, \varphi_k \rangle = 0 $ when $k \in \{m-2, m, m+2, \cdots, m+2[\lambda]\} = B$. Then there exists an eigenfunction $u(r, \theta)$ of $\Delta_h$ on $\mathbb{R}^m\backslash \{0\}$ such that  $\lim\limits_{r \to +\infty} \frac{u(r,\theta)}{u_0(r)} = g(\theta)$.
\end{theorem}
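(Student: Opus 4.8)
The plan is to run, for eigenfunctions of $\Delta_h$, the same three–step scheme used for quasi-harmonic functions in Theorem~\ref{thm:1.6}: separate variables, solve each radial ODE with the prescribed behaviour at infinity, and re-sum. Writing $u(r,\theta)=\sum_k a_k(r)\varphi_k(\theta)$ with $-\Delta_{S^{m-1}}\varphi_k=\lambda_k\varphi_k$, $\lambda_k=k(k+m-2)$, the equation $\Delta_h u=-\lambda u$ becomes the family of ODEs
\begin{equation*}
 a_k''+\Big(\tfrac{m-1}{r}-\tfrac r2\Big)a_k'+\Big(\lambda-\tfrac{\lambda_k}{r^2}\Big)a_k=0 .
\end{equation*}
The substitution $a_k(r)=r^{k}f_k(r^2/4)$ turns this into Kummer's confluent hypergeometric equation with parameters $\alpha=\tfrac k2-\lambda$, $\beta=k+\tfrac m2$, so at infinity a fundamental system consists of one solution $\sim e^{r^2/4}r^{-(m+2\lambda)}$ (the dominant one, proportional to $r^{k}M(\alpha,\beta,r^2/4)$ in the non-exceptional cases) and one solution $\sim r^{2\lambda}$ (the recessive one, proportional to $r^{k}U(\alpha,\beta,r^2/4)$); the given $u_0=e^{r^2/4}r^{-(m+2\lambda)}$ is exactly the leading term of the former. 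This is where the spherical-harmonic expansion Lemma~3.2 enters, just as Lemma~2.3 does in the proof of Theorem~\ref{thm:1.6}.

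For the construction, set $g_k=\langle g,\varphi_k\rangle$. For every index $k$ for which the dominant radial solution genuinely grows like $e^{r^2/4}r^{-(m+2\lambda)}$, let $a_k$ be the unique multiple of it with $a_k/u_0\to g_k$; then $u:=\sum_k a_k(r)\varphi_k(\theta)$ is the candidate. The dominant solution fails to have the clean growth $e^{r^2/4}r^{-(m+2\lambda)}$ precisely when $M(\alpha,\beta,\cdot)$ terminates (i.e.\ $\alpha\in\mathbb Z_{\le 0}$, equivalently $2\lambda-k\in\{0,2,4,\dots\}$) or when the origin-singular companion solution forces a logarithmic factor into the $e^{r^2/4}r^{-(m+2\lambda)}$ asymptotics; matching the indicial exponents $k$ and $-(k+m-2)$ at the origin against the exponents $2\lambda$ and $-(m+2\lambda)$ at infinity shows that this occurs for \emph{no} $k$ when $2\lambda\notin\mathbb Z$, for $k\in A$ when $\lambda\in\mathbb Z$, and for $k\in B$ when $2\lambda\in\mathbb Z$ but $\lambda\notin\mathbb Z$ — which is exactly the case distinction in the statement. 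On the excluded indices one imposes $g_k=0$, and then $a_k$ and hence $u$ are well defined.

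Convergence and verification go as follows. Because $g\in H^{[m/2]+2}(S^{m-1})$, the coefficients $g_k$ decay fast enough that, combined with polynomial-in-$k$ bounds for $a_k,a_k',a_k''$ on compact subsets of $(0,\infty)$ coming from the series/integral representations of $M$ and $U$, the series for $u$ converges in $C^{2}_{\mathrm{loc}}(\mathbb R^m\setminus\{0\})$; hence $\Delta_h u=-\lambda u$ there. For the limit at infinity one writes $u/u_0=\sum_k g_k\big(a_k/(g_ku_0)\big)\varphi_k$ and interchanges $\lim_{r\to\infty}$ with the sum, using that on $\{r\ge R\}$ the ratios $a_k/(g_ku_0)$ are bounded uniformly in $k$ by an $H^{[m/2]+2}$–summable majorant; this yields $\lim_{r\to\infty}u/u_0=\sum_k g_k\varphi_k=g$. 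For uniqueness, if $u/u_0$ and $\bar u/u_0$ have the same limit $g$, then $w=u-\bar u$ solves $\Delta_h w=-\lambda w$ with $w/u_0\to0$; by the Colding--Minicozzi growth dichotomy (Theorem~\ref{thm:1.4}) $w$ grows at most polynomially, and expanding $w$ in spherical harmonics forces each radial coefficient to be a multiple of the recessive solution $\sim r^{2\lambda}$, which in the case $2\lambda\notin\mathbb Z$ gives the asserted form $w=p(r)\sim r^{2\lambda}$.

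The hard part is the interaction of the last two steps: one must obtain uniform-in-$k$ control of the confluent hypergeometric solutions and of their first two derivatives, both on compact subsets of $(0,\infty)$ (for the $C^{2}$ convergence making $u$ a genuine eigenfunction) and on $\{r\ge R\}$ (to legitimize the term-by-term passage to the limit $r\to\infty$), and one must identify precisely which $k$ yield a merely polynomial or logarithmically-corrected dominant solution. The latter bookkeeping is exactly what produces the exceptional sets $A$ and $B$, and what guarantees that for $2\lambda\notin\mathbb Z$ no condition on $g$ is needed at all.
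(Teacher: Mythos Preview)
Your proposal is essentially the paper's own approach: separation into spherical harmonics, reduction of each radial ODE to Kummer's equation, construction of $u$ by picking for each admissible $k$ the dominant solution normalized so that $a_k/u_0\to g_k$, and re-summation using the regularity of $g$. The paper is more concrete where you are schematic: instead of appealing to generic ``polynomial-in-$k$ bounds for $a_k,a_k',a_k''$ from series/integral representations of $M$ and $U$'', it uses the explicit asymptotic of Lemma~3.1, namely $|g_k(x)|\le C(m,\delta,\lambda)\,l_k^{2\delta}x^{-\delta}$ for large $k$, together with the sup-norm bound $|\varphi_k|^2\le C(m)k^{m-2}$ and the Fourier decay $|\bar C_k|\le C\lambda_k^{-([m/2]/2+1)}$ from Lemma~2.4, to obtain uniform convergence on every closed annulus and to justify the interchange of limit and sum at infinity. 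Your identification of the exceptional index sets $A$ and $B$ via the termination of $M(\alpha,\beta,\cdot)$ matches the paper's bookkeeping in Lemma~3.2.

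The one place where you genuinely diverge is the uniqueness argument, and there your sketch has a gap. The paper does not invoke the growth dichotomy at all: from Lemma~3.2 each mode $k\ge1$ carries a \emph{single} free constant $C_k$, and that constant is recovered as $C_k=\lim_{r\to\infty} f_k/u_0=\langle g,\varphi_k\rangle$, so equal limits force $C_k=\bar C_k$ for all $k$ and the difference $u-\bar u$ reduces to the second $k=0$ solution $p(r)$. Your route via Theorem~\ref{thm:1.4} correctly yields polynomial growth of $w=u-\bar u$, but the sentence ``forces each radial coefficient to be a multiple of the recessive solution $\sim r^{2\lambda}$, which \dots\ gives $w=p(r)$'' does not follow: if every mode $k$ admitted a recessive solution $\sim r^{2\lambda}$, then $w$ could be $\sum_k c_k(\text{recessive}_k)\varphi_k$, which is not radial. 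To close the argument you need the additional input, built into the paper's Lemma~3.2 but absent from your outline, that for $k\ge1$ the second (origin-singular) solution is excluded, so that the only available recessive piece is the radial $k=0$ one.
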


Finally, we consider Liouville theorem of quasi-harmonic function with these expansions.One may wonder whether the quasi-harmonic functions still possess the basic properties of harmonic functions. Cheng-Yau \cite{3} proved that any harmonic function with sub-linear growth on manifolds with non-negative Ricci curvature must be constant. Li-Wang \cite{4} showed that there is no non-constant positive quasi-harmonic function on $\mathbb{R}^m$ with polynomial growth in Theorem 4.2. In this paper, we can improve this result by replacing the condition of polynomial growth with exponential growth.
 
\begin{theorem}\label {thm:1.1}
Let $u$ be a quasi-harmonic function (i.e, $\Delta_gu=0$) in $\mathbb{R}^m$. If there exists a sequence $r_i \rightarrow +\infty$ such that $\big(\int_{S^{m-1}} ( u(r_i,\theta))^2\big)^{\frac{1}{2}}  \le Ce^{\frac{r_i^2}{4}}r_i^{-(m+\epsilon)}$ for some $\epsilon > 0$, then $u$ is a constant.
\end{theorem}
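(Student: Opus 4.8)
The plan is to read the conclusion off the spherical–harmonic expansion of $u$ supplied by Lemma 2.3. Fix an $L^2(S^{m-1})$–orthonormal basis $\{\varphi_k\}_{k\ge 0}$ with $\Delta_{S^{m-1}}\varphi_k=-\lambda_k\varphi_k$, $\lambda_k=k(k+m-2)$. Writing $\Delta_g u=0$ in polar coordinates shows that $u$ is quasi-harmonic iff each Fourier coefficient $r\mapsto\langle u(r,\cdot),\varphi_k\rangle$ solves
\[
\phi_k''+\Big(\tfrac{m-1}{r}-\tfrac r2\Big)\phi_k'-\tfrac{\lambda_k}{r^2}\phi_k=0 ;
\]
since $u$ is smooth on all of $\mathbb R^m$, these coefficients are regular at $r=0$, so $u(r,\theta)=\sum_{k\ge 0}a_k\phi_k(r)\varphi_k(\theta)$ where the $a_k$ are constants and $\phi_k$ is the solution with $\phi_k(r)\sim r^k$ as $r\to 0$. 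One has $\phi_0\equiv 1$, whereas for $k\ge 1$ the Frobenius method gives $\phi_k(r)=\sum_{\ell\ge 0}c_{k,\ell}r^{k+2\ell}$ with $c_{k,0}=1$ and $c_{k,\ell+1}=\tfrac{(k+2\ell)c_{k,\ell}}{4(\ell+1)(2k+2\ell+m)}$, so all $c_{k,\ell}>0$.

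The crux — and the only step with content — is the asymptotics at infinity: for every $k\ge 1$,
\[
\phi_k(r)=B_k\,e^{r^2/4}r^{-m}\,(1+o(1))\quad\text{as }r\to\infty,\qquad B_k>0 .
\]
From the recursion, $c_{k,\ell+1}/c_{k,\ell}=\tfrac1{4(\ell+1)}\cdot\tfrac{k+2\ell}{2k+2\ell+m}$, hence $c_{k,\ell}\asymp 4^{-\ell}(\ell!)^{-1}\ell^{-(k+m)/2}$, and a Watson/saddle–point estimate for $\sum_\ell c_{k,\ell}r^{k+2\ell}$ yields the displayed asymptotics; crucially, positivity of all $c_{k,\ell}$ forces $B_k\neq 0$, i.e. the exponentially large mode $e^{r^2/4}r^{-m}$ is always present (the other asymptotic behavior of the ODE at infinity is a solution tending to a constant). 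Note that $B_k\,e^{r^2/4}r^{-m}$ is, to leading order, the growth rate of the radial solution $u_0$. I expect this asymptotic statement to be already contained in, or an immediate consequence of, Lemma 2.3.

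Granting it, the proof ends quickly. Parseval on $S^{m-1}$ gives $\int_{S^{m-1}}u(r,\theta)^2\,d\theta=\sum_{k\ge0}a_k^2\phi_k(r)^2\ge a_{k_0}^2\phi_{k_0}(r)^2$ for any fixed $k_0$. If $a_{k_0}\neq0$ for some $k_0\ge 1$, then testing at $r=r_i$ and invoking the hypothesis yields
\[
a_{k_0}^2B_{k_0}^2\,e^{r_i^2/2}r_i^{-2m}(1+o(1))\le a_{k_0}^2\phi_{k_0}(r_i)^2\le\int_{S^{m-1}}u(r_i,\theta)^2\,d\theta\le C^2e^{r_i^2/2}r_i^{-2(m+\epsilon)} ,
\]
so $a_{k_0}^2B_{k_0}^2\,r_i^{2\epsilon}(1+o(1))\le C^2$, which is impossible as $r_i\to\infty$ since $\epsilon>0$. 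Hence $a_k=0$ for all $k\ge 1$ and $u$ is constant. The sole genuine obstacle is the sharp asymptotic $\phi_k\sim B_ke^{r^2/4}r^{-m}$ with $B_k\neq0$ for every $k\ge1$; everything else is bookkeeping with the expansion and Parseval's identity.
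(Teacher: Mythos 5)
Your proposal is correct and is essentially the paper's own argument: expand $u$ in spherical harmonics, use that the regular-at-the-origin radial solutions grow like $B_k e^{r^2/4}r^{-m}$ with $B_k\neq 0$ (exactly the content of Lemmas 2.1--2.3, which you invoke rather than re-derive; the paper gets it from Kummer-function asymptotics, you sketch it from the power series), and then test the hypothesis at $r=r_i$ via Cauchy--Schwarz/Bessel to force every coefficient with $k\ge 1$ to vanish. The only cosmetic difference is the $k=0$ mode, which you dispose of by regularity at the origin while the paper also uses the growth bound to kill the $\int_0^r e^{s^2/4}s^{1-m}\,ds$ term; both are fine.
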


Colding-Minicozzi  mentioned that if $\Delta_hu=0$ and $||u||_{L^2(\mathbb{R}^m)}^2=\int_{\mathbb{R}^m}u^2e^{-h} < \infty$ in \cite{6}, then $u$ must be constant.  More generally, they showed the following result in \cite{5}.
\begin{lemma}
\cite{5} If $\Delta_{h}u=-\lambda u$ on $\mathbb{R}^m$ and $\int_{\mathbb{R}^m}u^2e^{-h} < \infty$, then $\lambda$ is a half-integer and $u$ is a polynomial of degree $2\lambda$.
\end{lemma}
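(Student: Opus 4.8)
\emph{Proof plan.} The plan is to diagonalize $\Delta_h$ on the Gaussian space $\mathcal H := L^2(\mathbb R^m, e^{-h}\,dx)$ by Hermite polynomials and to show that $u$ is, in the $\mathcal H$-expansion, concentrated on a single eigenvalue. Under the rescaling $x=\sqrt2\,y$ the operator $\Delta_h=\Delta-\tfrac12 x\cdot\nabla$ becomes $\tfrac12(\Delta_y-y\cdot\nabla_y)$ and $e^{-h}\,dx$ becomes a multiple of the standard Gaussian measure; hence the tensor Hermite polynomials $H_\alpha(x):=\prod_{i=1}^m\mathrm{He}_{\alpha_i}(x_i/\sqrt2)$, $\alpha\in\mathbb Z_{\ge0}^m$, form a complete orthogonal system of $\mathcal H$ (polynomials are dense in $\mathcal H$ because the weight decays Gaussianly — classical), $H_\alpha$ is a polynomial of degree exactly $|\alpha|:=\alpha_1+\cdots+\alpha_m$ with top-degree part $2^{-|\alpha|/2}x^\alpha$, and $\Delta_h H_\alpha=-\tfrac{|\alpha|}{2}H_\alpha$.

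The key step is to prove $\langle u,H_\alpha\rangle_{\mathcal H}=0$ whenever $|\alpha|\neq 2\lambda$. First, by interior elliptic regularity $u$ is smooth (if not assumed so), and a standard Caccioppoli/energy estimate — multiply $\Delta_h u=-\lambda u$ by $\phi^2 u\,e^{-h}$ for a radial cutoff $\phi$, integrate, and absorb the cross term — yields $\int_{\mathbb R^m}|\nabla u|^2 e^{-h}\le C(|\lambda|+1)\int_{\mathbb R^m}u^2 e^{-h}<\infty$. Next, apply the weighted Green identity on a ball $B_R$, using $(\Delta_h v)e^{-h}=\mathrm{div}(e^{-h}\nabla v)$:
\[
\int_{B_R}\bigl(H_\alpha\,\Delta_h u-u\,\Delta_h H_\alpha\bigr)e^{-h}
=\int_{\partial B_R}\bigl(H_\alpha\,\partial_r u-u\,\partial_r H_\alpha\bigr)e^{-h}.
\]
On the left, $|u H_\alpha|e^{-h}\le\tfrac12(u^2+H_\alpha^2)e^{-h}\in L^1$, so as $R\to\infty$ the left side tends to $(-\lambda+\tfrac{|\alpha|}{2})\langle u,H_\alpha\rangle$. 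On the right, since $|H_\alpha|$ and $|\nabla H_\alpha|$ grow only polynomially while $e^{-h}=e^{-|x|^2/4}$ decays, Cauchy--Schwarz on $\partial B_R$ bounds it by $C_\alpha R^{N}e^{-R^2/8}\bigl(\int_{\partial B_R}(u^2+|\nabla u|^2)e^{-h}\bigr)^{1/2}$ for some $N=N(\alpha,m)$; choosing $R_i\to\infty$ along which $\int_{\partial B_{R_i}}(u^2+|\nabla u|^2)e^{-h}\to 0$ — possible since this is an integrable function of $R$ by the previous step — forces the boundary term to $0$. Hence $(\lambda-\tfrac{|\alpha|}{2})\langle u,H_\alpha\rangle=0$.

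It then follows at once that $2\lambda$ must be a non-negative integer (otherwise $\langle u,H_\alpha\rangle=0$ for every $\alpha$, so $u\equiv0$), i.e.\ $\lambda$ is a half-integer, and $\langle u,H_\alpha\rangle=0$ for all $\alpha$ with $|\alpha|\neq 2\lambda$. By completeness $u=\sum_{|\alpha|=2\lambda}\|H_\alpha\|^{-2}\langle u,H_\alpha\rangle H_\alpha$ is a finite sum, hence a polynomial of degree at most $2\lambda$; and since the monomials $\{x^\alpha:|\alpha|=2\lambda\}$ are linearly independent, the degree is exactly $2\lambda$ for $u\not\equiv0$.

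The main obstacle is the passage to the limit in Green's identity: a priori one only knows $u\in\mathcal H$, with no pointwise control at infinity, so the boundary integrals over large spheres must be shown to vanish along a sequence. This is handled by first upgrading to $\nabla u\in\mathcal H$ via the Caccioppoli estimate and then using that the Gaussian factor $e^{-|x|^2/4}$ completely absorbs the polynomial growth of the Hermite test functions, so that no sharp asymptotics of $u$ are needed; everything else (completeness and eigenvalues of the Hermite basis, interior regularity) is classical.
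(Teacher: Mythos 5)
Your argument is correct, and it is worth noting that the paper itself offers no proof of this lemma: it is quoted from Colding--Minicozzi \cite{5}, so any comparison is with the paper's surrounding machinery rather than with a proof of record. Your route is the natural spectral one: diagonalize $\Delta_h=\Delta-\tfrac12 x\cdot\nabla$ on $L^2(e^{-h}dx)$ by tensor Hermite polynomials (eigenvalues $-\tfrac{|\alpha|}{2}$, which is exactly where the half-integers come from), upgrade $u$ to $\nabla u\in L^2(e^{-h})$ by the Caccioppoli estimate, and kill the boundary terms in the weighted Green identity along a sequence of radii where $\int_{\partial B_R}(u^2+|\nabla u|^2)e^{-h}$ is small, which exists because this quantity is integrable in $R$; the polynomial growth of $H_\alpha$ is absorbed by $e^{-|x|^2/4}$, and completeness of the Hermite system plus the leading terms $2^{-|\alpha|/2}x^{\alpha}$ gives degree exactly $2\lambda$. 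All steps check out (the only implicit convention is $u\not\equiv0$, which is what the statement intends). The paper's own approach to this circle of results, in Section~3, is quite different: it expands $u$ in spherical harmonics, solves the resulting radial ODEs by Kummer functions, and extracts uniform asymptotics of $F[a,b;-x]$ (Lemmas~3.1--3.2), from which Theorem~\ref{thm:1.3} recovers and strengthens the lemma under the much weaker hypothesis of a bound $\big(\int_{S^{m-1}}u^2(r_i,\cdot)\big)^{1/2}\le Ce^{r_i^2/4}r_i^{-(m+2\lambda+\epsilon)}$ along a single sequence $r_i\to\infty$, and it also describes the non-$L^2$ solutions on $\mathbb{R}^m\setminus\{0\}$. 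So your proof buys brevity and self-containedness but uses the full Gaussian $L^2$ finiteness on all of $\mathbb{R}^m$, whereas the paper's expansion machinery buys sharper, mode-by-mode asymptotic information and the weaker growth hypothesis, at the cost of the Kummer-function analysis.
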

\begin{theorem}\label {thm:1.3}
If $\Delta_{h}u=-\lambda u$ on $\mathbb{R}^m\backslash \{0\}.$ If there exists a sequence  $r_i \rightarrow +\infty$ such that $\big(\int_{S^{m-1}} ( u(r_i,\theta))^2\big)^{\frac{1}{2}}  \le C e^{\frac{r_i^2}{4}} r_i^{-(m+2\lambda + \epsilon)}$ for some $\epsilon > 0$, then $u$ is a polynomial of degree $2 \lambda$ when $2 \lambda$ is an integer or $u=p(r)$ satisfying $p(r) \sim r^{2\lambda}$ when $2 \lambda$ is not an integer.
\end{theorem}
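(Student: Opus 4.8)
The plan is to expand $u$ in spherical harmonics, identify each radial mode with a solution of Kummer's confluent hypergeometric equation, and then use the growth bound along $r_i$ to eliminate the super-polynomially growing part of every mode. Using the spherical-harmonic expansion of Lemma 3.2, write $u(r,\theta)=\sum_k u_k(r)\varphi_k(\theta)$, where $\varphi_k$ is a spherical harmonic on $S^{m-1}$ with $\Delta_{S^{m-1}}\varphi_k=-\lambda_k\varphi_k$, $\lambda_k=k(k+m-2)$. Since $h=\tfrac{|x|^2}{4}$ one has $\Delta_h=\partial_r^2+(\tfrac{m-1}{r}-\tfrac r2)\partial_r+\tfrac{1}{r^2}\Delta_{S^{m-1}}$, so $\Delta_h u=-\lambda u$ forces, for each $k$,
\[ u_k''+\Big(\frac{m-1}{r}-\frac r2\Big)u_k'-\frac{\lambda_k}{r^2}u_k+\lambda u_k=0. \]
Substituting $t=r^2/4$ and $u_k=t^{k/2}w_k$ turns this into Kummer's equation $t\,w_k''+(b_k-t)w_k'-a_kw_k=0$ with $b_k=k+\tfrac m2$ and $a_k=\tfrac k2-\lambda$; thus $u_k=\alpha_k\phi_k^{+}+\beta_k\phi_k^{-}$, where $\phi_k^{+}=t^{k/2}M(a_k,b_k,t)$ and $\phi_k^{-}=t^{k/2}U(a_k,b_k,t)$.

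Next I would record the behaviour as $r\to\infty$: $\phi_k^{-}\sim c_k\,r^{2\lambda}$ (with an extra logarithmic factor in the exceptional cases $b_k\in\mathbb{Z}$, i.e.\ $m$ even), while $\phi_k^{+}\sim c_k\,e^{r^2/4}r^{-(m+2\lambda)}$ unless $a_k$ is a non-positive integer, i.e.\ unless $2\lambda-k\in\{0,2,4,\dots\}$, in which case $M(a_k,b_k,\cdot)$ terminates and $\phi_k^{+}$ is a polynomial in $r$ of degree $2\lambda$. Since $\int_{S^{m-1}}u(r,\theta)^2\,d\theta=\sum_k u_k(r)^2$, the hypothesis yields $|u_k(r_i)|\le Ce^{r_i^2/4}r_i^{-(m+2\lambda+\epsilon)}$ for every $k$ and $i$; because $e^{r^2/4}r^{-(m+2\lambda)}$ exceeds this bound by the factor $r^{\epsilon}\to\infty$ while $\phi_k^{-}$ is of strictly lower order, the coefficient $\alpha_k$ must vanish for every mode whose $\phi_k^{+}$ is genuinely fast. (Equivalently, one can first apply the Colding--Minicozzi frequency dichotomy, Theorem \ref{thm:1.4}: the assumption contradicts the lower bound $\sqrt{I}\ge Ce^{r^2/4}r^{-(m+2\lambda-\epsilon)}$ of the fast alternative, so the frequency $U(\bar r)$ stays below $\delta+2\sup\{0,\lambda\}$, whence $u$ has polynomial growth, and the ODE analysis above applies without exceptional cases.)

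It then remains to read off the surviving form of $u$. If $2\lambda$ is a non-negative integer, the only modes that can contribute through $\phi_k^{+}$ are those with $2\lambda-k\in\{0,2,4,\dots\}$, and for each the surviving solution is the terminating (polynomial) one; writing $\varphi_k(\theta)=|x|^{-k}P_k(x)$ for a homogeneous harmonic polynomial $P_k$ of degree $k$, each term $\phi_k^{+}(r)\varphi_k(\theta)$ is a polynomial in $x$ of degree $\le2\lambda$ whose top part is a nonzero multiple of $|x|^{2\lambda-k}P_k(x)$, so the finite sum $u$ is a polynomial of degree $2\lambda$. If $2\lambda$ is not an integer, no $\phi_k^{+}$ terminates, so every surviving contribution is of $\phi_k^{-}$-type and of size $O(r^{2\lambda})$; a further examination of the expansion (using once more the bound along $r_i$ together with the structure of Lemma 3.2) shows that only the radial contribution can occur, giving $u=p(r)$ with $p(r)\sim r^{2\lambda}$.

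The main obstacle is twofold. First, one must handle the degenerate cases of Kummer's equation carefully --- the logarithmic second solution when $m$ is even (so $b_k\in\mathbb{Z}$) and the borderline $a_k\in\mathbb{Z}_{\le 0}$ where $M$ terminates --- so that the stated $r\to\infty$ asymptotics of $\phi_k^{\pm}$ hold in all cases; routing the first step through the Colding--Minicozzi frequency monotonicity is the cleanest way to extract polynomial growth and thereby sidestep much of this bookkeeping. Second, and most delicate, is the non-integer case: one has to rule out the higher angular modes $\phi_k^{-}(r)\varphi_k(\theta)$ with $k\ge1$ and show that the $O(r^{2\lambda})$ remainder is purely radial. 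Once these points are settled, the dichotomy in the statement follows by inspecting the (now finite, or purely radial) spherical-harmonic expansion of $u$.
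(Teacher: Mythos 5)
Your overall route --- expand $u$ in spherical harmonics, reduce each radial mode to Kummer's equation (your $\phi_k^{+}=t^{k/2}M(a_k,b_k,t)$, $\phi_k^{-}=t^{k/2}U(a_k,b_k,t)$ with $a_k=\tfrac k2-\lambda$, $b_k=k+\tfrac m2$), and use the bound along $r_i$ to kill the coefficients of the modes growing like $e^{r^2/4}r^{-(m+2\lambda)}$ --- is essentially the paper's own argument (Lemma 3.1 and Lemma 3.2 followed by the comparison of $|\langle u(r_i,\cdot),\varphi_k\rangle|$ with the hypothesis), just phrased with $M,U$ instead of $e^{r^2/4}r^{2l_k}F[l_k+\tfrac m2+\lambda,2l_k+\tfrac m2;-\tfrac{r^2}4]$ and its asymptotics at negative argument; your exceptional set $2\lambda-k\in\{0,2,4,\dots\}$ is exactly the paper's $A$ (resp.\ $B$).

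The genuine gap is the step you yourself call ``most delicate'' and leave open: eliminating the slowly growing second solutions $\beta_k\phi_k^{-}(r)\varphi_k(\theta)$ for $k\ge 1$. These grow only like $r^{2\lambda}$, so they satisfy the hypothesis $\le Ce^{r_i^2/4}r_i^{-(m+2\lambda+\epsilon)}$ along any sequence; neither ``the bound along $r_i$'' nor the Colding--Minicozzi frequency dichotomy (which only separates polynomial growth of $\sqrt I$ from $e^{r^2/4}$-type growth) can detect them, so the tools you point to cannot close this step. The same issue is present, silently, in your integer case: the ``finite sum'' conclusion presupposes that no non-terminating $U$-type mode with $k\ge1$ survives, which the growth bound at infinity does not give. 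In the paper this elimination does not come from infinity at all: it is built into Lemma 3.2 (``the second solution should be ruled out with the same reason as in the proof of Lemma 2.1''), i.e.\ from the behavior at the origin --- $\phi_k^{-}\sim r^{2-k-m}$ blows up as $r\to0$ and is discarded through the condition $f_k(0)=\langle u(0),\varphi_k\rangle=0$, which tacitly assumes $u$ extends to the origin; only the radial second solution ($k=0$) survives, giving $p(r)$, $l(r)$, or the $p(r)$ absorbed into $t(r,\theta)$. To complete your proof you must add such an argument at $r=0$ (or state the same regularity assumption); without it the non-radial $U$-modes are simply not excluded. A minor point: your parenthetical that $\phi_k^{-}$ acquires a logarithm when $b_k\in\mathbb{Z}$ concerns the expansion at $t\to0$, not $t\to\infty$; at infinity $U(a,b,t)\sim t^{-a}$ in all cases, so it does not affect the asymptotics you need.
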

Since $\int_{\mathbb{R}^m}u^2e^{-h} =\int^{+\infty}_0\int_{S^{m-1}}u^2e^{-\frac{r^2}{4}}< \infty$ implies that there exists a sequence  $r_i \rightarrow +\infty$ such that $\big(\int_{S^{m-1}} ( u(r_i,\theta))^2\big)^{\frac{1}{2}} \le Ce^{\frac{r_i^2}{8}}$. Theorem 1.5 and Theorem 1.7 can also be see as a generalization of Lemma 1.6.

\hspace{0.4cm}
\section{The asymptotic behavior of quasi-harmonic functions at infinity}
\setcounter{equation}{0}
\medskip
Assume that $u$ is quasi-harmonic function , i.e.,
\begin{equation}
\Delta_g u = 0.
\end{equation}
We rewrite it in the following form
\begin{equation}
\Delta u - (\nabla h, \Delta u) = 0,
\end{equation}
where $h = \frac{r^2}{4}$. We know that the Euclidean metric of $\mathbb{R}^m$ can be written in spherical coordinates $(r, \theta)$ as
\[ ds^2 = dr^2 + r^2d\theta^2,\]
where $d\theta^2$ is the standard metric on $S^{m-1}$. Then
\[\Delta = \Delta_r + \frac{1}{r^2}\Delta_\theta,\]
where $\Delta_\theta$ is the Laplacian on the standard $S^{m-1}$. It is clear that
\[ \nabla h \cdot \nabla = \frac{r}{2} \frac{\partial}{\partial r}.\]
It follows from (2.2) that
\[ u_{rr} + \frac{m-1}{r} u_r + \frac{1}{r^2} \Delta_\theta u - \frac{r}{2} \frac{\partial u}{\partial r} = 0.\]

Let $\varphi_k$ be the orthonormal basis on $L^2(S^{m-1})$ corresponding to the eigenvalues,
\[ 0 = \lambda_0 < \lambda_1 \le \lambda_2 \le \cdots \le \lambda_k \to \infty \]
we have
\[ \Delta_\theta \varphi_k = - \lambda_k \varphi_k. \]
Let $\langle \cdot, \cdot \rangle$ denote $L^2$ inner product of $L^2 (S^{m-1})$. Then we have
\begin{align*}
\langle \Delta_r u, \varphi_k \rangle &= \Delta_r \langle u, \varphi_k \rangle,\\
\langle \Delta_\theta u, \varphi_k \rangle &= \langle u, \Delta_\theta \varphi_k \rangle = -\lambda_k \langle u, \varphi_k \rangle,\\
\langle \frac{\partial u}{\partial r}, \varphi _k \rangle &= \frac{\partial \langle u, \varphi_k \rangle}{\partial r}.
\end{align*}
Let $f_k(r) = \langle u(r,\cdot), \varphi_k \rangle$ for $k \ge 0$. Then we see that $f_k$ satisfies
\begin{equation}
(f_k)_{rr} + (\frac{m-1}{r} - \frac{r}{2})(f_k)_r = \frac{\lambda_k f_k}{r^2}.
\end{equation}
Let 
\begin{align*}
&f_k(r) = w(z)z^{l_k}, \\
&z = r^2, \\
&l_k = \frac{-(m-2) + \sqrt{(m-2)^2 + 4\lambda_k}}{4}.
\end{align*}
Then
\begin{align*}
(f_k)_r &= 2(w'(z)z^{l_k + \frac{1}{2}} + l_kz^{l_k - \frac{1}{2}}w(z)), \\
(f_k)_{rr} &= 4w''(z)z^{l_k+1} + 2(4l_k + 1)z^{l_k}w'(z) + l_k(4l_k - 2)z^{l_k -1}w(z).
\end{align*}
Then by (2.3), we have
\begin{equation}
zw''(z) + ((2l_k + \frac{m}{2}) - \frac{z}{4})w'(z) - \frac{l_k}{4} w(z) = 0.
\end{equation}
Assume
\[ w(z) = e^{\frac{z}{4}}y(-\frac{z}{4}).\]
Then
\begin{align*}
w_z &= \frac{1}{4} e^{\frac{z}{4}}(y(-\frac{z}{4}) - y'(-\frac{z}{4})), \\
w_{zz} &= \frac{1}{16} e^{\frac{z}{4}}(y(-\frac{z}{4}) - 2y'(-\frac{z}{4}) + y''(-\frac{z}{4})).
\end{align*}
By (2.4), we have
\begin{equation}
\frac{z}{4} y''(-\frac{z}{4}) + (-\frac{z}{4} - (2l_k + \frac{m}{2}))y'(-\frac{z}{4}) + (l_k + \frac{m}{2})y(-\frac{z}{4}) = 0.
\end{equation}
Let $x = -\frac{z}{4}$, then we have
\begin{equation}
xy''(x) + ((2l_k + \frac{m}{2}) -x)y'(x) - (l_k + \frac{m}{2})y(x) = 0.
\end{equation}
\begin{lemma}
For $k\ge 1,$ the general solution of (2.3) is
\[ f_k(r) = c_1 e^{\frac{r^2}{4}} r^{2l_k} F[l_k + \frac{m}{2}, 2l_k + \frac{m}{2}; -\frac{r^2}{4}],\]
where $F[a,b;x]$ is a Kummer's function in \cite{9} (see Page 2).
\end{lemma}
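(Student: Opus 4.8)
The plan is to take seriously the chain of substitutions assembled just before the statement and to observe that it converts the ODE~(2.3) into Kummer's confluent hypergeometric equation, whose fundamental system of solutions is classical; the claimed formula then drops out by reversing the substitutions.

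First I would note that on the half-line $r>0$ each of the changes $f_k(r)=w(z)z^{l_k}$, $z=r^2$, $w(z)=e^{z/4}y(-z/4)$, $x=-z/4$ is a smooth invertible substitution in both the unknown and the variable, so it carries the solution space of~(2.3) bijectively onto the solution space of~(2.7), namely
\[ xy''(x)+\Big((2l_k+\tfrac m2)-x\Big)y'(x)-(l_k+\tfrac m2)y(x)=0. \]
The computations passing from~(2.3) to~(2.4), then to~(2.6), then to~(2.7) are exactly the displayed ones, so nothing new is required there. Equation~(2.7) is Kummer's equation with parameters $a=l_k+\tfrac m2$ and $b=2l_k+\tfrac m2$, and by definition the function $F[a,b;x]$ (the confluent hypergeometric function $M(a,b;x)={}_1F_1(a;b;x)$ of~\cite{9}) solves it. Reversing the four substitutions sends the solution $y(x)=F[l_k+\tfrac m2,\,2l_k+\tfrac m2;\,x]$ back to $f_k(r)=e^{r^2/4}\,r^{2l_k}\,F[l_k+\tfrac m2,\,2l_k+\tfrac m2;\,-\tfrac{r^2}4]$, which up to the constant $c_1$ is the asserted solution.

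To check that this exhausts the solution space, I would bring in the second member of the fundamental system of Kummer's equation. For $k\ge 1$ one has $l_k>0$, hence $b=2l_k+\tfrac m2>1$ is never a non-positive integer, so the companion solution $x^{1-b}M(1+a-b,\,2-b;\,x)$ is defined and independent of $M(a,b;x)$; one checks $1+a-b=l_k^-+\tfrac m2$ and $2-b=2l_k^-+\tfrac m2$, where $l_k^-=\frac{-(m-2)-\sqrt{(m-2)^2+4\lambda_k}}{4}$ is the other root of the indicial equation $\alpha^2+(m-2)\alpha-\lambda_k=0$ of~(2.3) at $r=0$. Running this branch back through the substitutions produces, up to a constant, $e^{r^2/4}\,r^{2l_k^-}\,F[l_k^-+\tfrac m2,\,2l_k^-+\tfrac m2;\,-\tfrac{r^2}4]$, and the general solution of~(2.3) is the linear span of these two $F$-type solutions, the first of which is the stated $f_k$. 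When $b$ happens to be a positive integer the companion solution is resonant and must instead be recorded through the function $U(a,b;x)$ of~\cite{9} (or a logarithmic solution); in every case the solution space is two–dimensional.

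I expect the only real work to be the bookkeeping through the four substitutions (already largely carried out in the displays) and a careful treatment of the resonant parameter values of $b$; the conceptual content is simply ``reduce to Kummer's equation and quote''. The mild friction I anticipate is pinning down the normalization meant by ``general solution'' and, for the later applications, tracking the regularity at $r=0$ of the two branches $r^{2l_k^{\pm}}$.
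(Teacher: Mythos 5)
Your reduction to Kummer's equation is exactly the paper's computation, and your identification of the second branch (the other indicial root $l_k^-$, equivalently $x^{1-b}F[1+a-b,2-b;x]$) is correct algebra. But your proof stops precisely where the lemma's actual content begins: you conclude that the general solution of (2.3) is the \emph{two}-dimensional span of the $F$-branch and the companion branch, whereas the lemma asserts a one-parameter family $c_1e^{\frac{r^2}{4}}r^{2l_k}F[l_k+\frac m2,2l_k+\frac m2;-\frac{r^2}{4}]$. As written, your conclusion contradicts the statement rather than proving it; the missing (and only nontrivial) step is the elimination of the second branch.

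The paper discards it by using where $f_k$ comes from: $f_k(r)=\langle u(r,\cdot),\varphi_k\rangle$ with $u$ a quasi-harmonic function continuous at the origin, so for $k\ge1$ one has $f_k(0)=\langle u(0),\varphi_k\rangle=0$ since $\varphi_k$ is orthogonal to constants. Undoing the substitutions, the companion branch behaves like a nonzero constant times $e^{\frac{r^2}{4}}r^{2l_k}\big(-\tfrac{r^2}{4}\big)^{1-(2l_k+\frac m2)}\sim r^{2-(2l_k+m)}$ as $r\to0$, which does not tend to $0$ (it blows up; the same leading $x^{1-b}$ term persists in the resonant case $b\in\mathbb{Z}_{>0}$ even when a logarithm or the function $U(a,b;x)$ is needed), so it is incompatible with $f_k(0)=0$ and must be ruled out. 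Only after this exclusion does the one-constant formula follow. The ``friction'' you flagged about the meaning of ``general solution'' is exactly this point: the lemma is implicitly about the spherical-harmonic coefficients of an actual quasi-harmonic $u$, i.e.\ about solutions of (2.3) subject to the boundary condition at $r=0$, and without invoking that condition your argument proves a weaker statement than the one claimed.
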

\begin{proof}
Set $b = 2l_k + \frac{m}{2}, a = l_k + \frac{m}{2}$. Assume that one solution of the Kummer's equation (2.6) is
\[ y = a_0 x^c + a_1 x^{c+1} + a_2 x^{c+2} + \cdots + a_n x^{c+n} + \cdots .\]
If we substitute this series and its first two derivatives in the differential equation, and then equate to zero the coefficients of powers of $x$, we find  that
\begin{align*}
a_0 c(c + b - 1) &= 0, \\
a_1 (c + 1)(c + b) &= a_0(c+a), \\
a_2(c+2)(c + b + 1) &= a_1 (c + a + 1), \\
&\cdots
\end{align*}
Since $a_0\ne 0,$ it has two roots:\\
(i) $c=0$, we have
\[ a_n = \frac{a\cdots (a+n-1)}{b\cdots (b+n-1)n!},\]
which gives one solution in terms of Kummer's series
\[ y = a_0 F [a,b;x].\]
(ii) $c = 1 - b$, which leads to a second solution
\begin{align*}
a_1 (2 - b)& = a_0 (1 - b + a), \\
&\cdots \\
a_{n-1} (n - b)(n-1) &= a_{n-2} (1-b+a+n-2), \\
a_n (n + 1 - b)n & = a_{n-1} (1 - b + a + n -1), \\
a_{n+1}(n+2-b)(n + 1)& = a_n(1-b+a+n), \\
&\cdots 
\end{align*}

If $b = n $ for some integer $n$, since $b > a > 1$, then $1 - b + a \le 0$ and $1 - b + a +n - 2 > 0$. It implies that the solution is a polynomial of some degree $i\le n$
\[ y = x^{1-b} (a_0 + a_1x + \cdots + a_i x^i).\]
If $b \neq n $ for any integer $n > 0$, then
\[a_k = \frac{(1-b+a)\cdots(1-b+a + k -2)}{((k+1)-b)\cdots (2-b)k!},\]
and
\[y = x^{1-b}(a_0 + a_1x + \cdots + a_k x^k+ \cdots).\]
Then the solution of (2.3) is
\begin{align*}
f_k(r) &= c_1 e^{\frac{r^2}{4}} r^{2l_k} y(-\frac{r^2}{4}) \\
&= c_1 e^{\frac{r^2}{4}} r^{2l_k} (-\frac{r^2}{4})^{1-(2l_k + \frac{m}{2})}(a_0 + a_1(-\frac{r^2}{4}) + \cdots) \\
&= (-4)^{2l_k + \frac{m}{2} - 1} c_1 e^{\frac{r^2}{4}} r^{2 - (2l_k + m)}(a_0 + a_1(-\frac{r^2}{4}) + \cdots),
\end{align*}
which implies that $\lim\limits_{r \to 0} f_k(r) \neq 0$. This contradicts with the initial condition that
\[ f_k(0) = \lim_{r \to 0} f_k(r) = \langle \lim_{r \to 0} u(r,\theta), \varphi_k \rangle = \langle u(0), \varphi_k \rangle = 0.\]
Hence the second solution should be ruled out and the solution of (2.3) has the
following form
\[ f_k(r) = c_k e^{\frac{r^2}{4}} r^{2l_k} F[l_k + \frac{m}{2}, 2l_k + \frac{m}{2}; -\frac{r^2}{4}].\]
\end{proof}

\begin{lemma}

For any fixed $\delta>0$ and  $k\ge 1,$ we set $a=l_k + \frac{m}{2}, b=2l_k + \frac{m}{2}, c=-a+\delta$. Assume $L> \delta$ and $x$ is a positive real number, the following
asymptotic relation holds:
\[ F[a, b; -x] = x^{-a} \frac{\Gamma(b)}{\Gamma(b-a)}\Big(1 +\sum_{n=1}^{[L-\delta]} \frac{(a)_n(1+a-b)_n}{n!} x^{-n} + \frac{\Gamma(b-a)}{\Gamma(a)}J_k(x)x^a\Big), \]
 where $(a)_n=a(a+1)\cdots(a+n-1)$ and $J_k(x)= \int_{c-i\infty}^{c+i\infty} \frac{\Gamma(L-s)\Gamma(a - L +s)}{2\pi i \Gamma(b-L+s)} x^{s-L}ds$. Moreover, $\left| J_k(x)\right|\le C(m,L,\delta)l_k^{2(L-\delta)+\frac{m}{2}} |x|^{c-L}$ for $k$ sufficient large.
Assume $g_k(x)=\sum_{n=1}^{[L-\delta]} \frac{(a)_n(1+a-b)_n}{n!} x^{-n} + \frac{\Gamma(b-a)}{\Gamma(a)}J_k(x)x^a$, in particular, if $k$ is sufficient large, we set $L=2\delta,$ then
\[ |g_k(x)| \le C(m,\delta)\frac{l_k^{2\delta}}{x^{\delta}},\]  
 and if $k\le K$ for some $K>0$, we set $L=\delta+1,$ then 
\[|g_k(x)| \le C(m,K,\delta)\frac{1}{x}.\]

\end{lemma}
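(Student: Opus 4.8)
The plan is to read off the expansion from the Mellin--Barnes integral representation of the Kummer function and then push the contour of integration to the left. Since $x>0$ and, for $k\ge 1$, $b>a>0$ (indeed $a=l_k+\frac m2>0$ and $b-a=l_k>0$), one has
\[ F[a,b;-x]=\frac{\Gamma(b)}{\Gamma(a)}\cdot\frac{1}{2\pi i}\int_{(\gamma_0)}\frac{\Gamma(a+s)\,\Gamma(-s)}{\Gamma(b+s)}\,x^{s}\,ds, \]
where $(\gamma_0)$ is the vertical line $\mathrm{Re}(s)=\gamma_0$ with $-a<\gamma_0<0$, so that it separates the poles $s=-a-n$ of $\Gamma(a+s)$ from the poles $s=n$ ($n\ge 0$) of $\Gamma(-s)$; this is the standard representation of $F[a,b;\cdot]$ in \cite{9}. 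The integral converges absolutely because $\Gamma$ decays like $e^{-\frac{\pi}{2}|\mathrm{Im}(s)|}$ along vertical lines, and the same decay lets one discard the horizontal sides of any rectangular contour in the arguments below.

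First I would shift $(\gamma_0)$ onto the line $\mathrm{Re}(s)=c-L=-a+\delta-L$, which lies to the left of $-a$ because $L>\delta$. The contour then sweeps across exactly the simple poles $s=-a-n$ with $0\le n\le[L-\delta]$ (for $k$ large one has $\mathrm{Re}(b+s)=l_k-n>0$ at each of these, so no further pole is encountered; if $L-\delta$ is an integer one perturbs $L$ slightly so that no pole lands on the shifted contour). Using $\mathrm{Res}_{s=-a-n}\Gamma(a+s)=\frac{(-1)^n}{n!}$ together with $\Gamma(a+n)=(a)_n\Gamma(a)$, $\Gamma(b-a)=(b-a-n)_n\Gamma(b-a-n)$ and the identity $(-1)^n(b-a-n)_n=(1+a-b)_n$, the residue at $s=-a-n$ equals
\[ \frac{\Gamma(a)}{\Gamma(b-a)}\cdot\frac{(a)_n(1+a-b)_n}{n!}\cdot x^{-a-n}. \]
Summing these residues, multiplying by $\frac{\Gamma(b)}{\Gamma(a)}$, and applying the substitution $s\mapsto s-L$ to the leftover integral --- which turns it into exactly $J_k(x)$ --- one arrives at
\[ F[a,b;-x]=\frac{\Gamma(b)}{\Gamma(b-a)}x^{-a}\sum_{n=0}^{[L-\delta]}\frac{(a)_n(1+a-b)_n}{n!}x^{-n}+\frac{\Gamma(b)}{\Gamma(a)}J_k(x), \]
and factoring out $x^{-a}\frac{\Gamma(b)}{\Gamma(b-a)}$ and peeling off the $n=0$ term gives precisely the asserted identity.

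The analytic heart is the estimate for $J_k$. Parametrising the contour by $s=c+it$, $t\in\mathbb R$, one has $|x^{s-L}|=|x|^{c-L}$, and after inserting $c=-a+\delta$, $a=l_k+\frac m2$ and $b-a=l_k$ the modulus of the integrand becomes
\[ \Bigl|\frac{\Gamma(L+a-\delta-it)\,\Gamma(\delta-L+it)}{\Gamma(l_k+\delta-L+it)}\Bigr|=\Bigl|\frac{\Gamma(w+\mu)}{\Gamma(w+\nu)}\Bigr|\cdot|\Gamma(\nu+it)|,\qquad w=l_k+it,\ \mu=L+\frac{m}{2}-\delta,\ \nu=\delta-L, \]
where I used $|\Gamma(\bar z)|=|\Gamma(z)|$ to flip the sign of $it$ in the first factor; note $\mu-\nu=2(L-\delta)+\frac{m}{2}$. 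I would then invoke the uniform bound $|\Gamma(w+\mu)/\Gamma(w+\nu)|\le C(\mu,\nu)|w|^{\mu-\nu}$, valid as soon as $\mathrm{Re}(w)$ exceeds a fixed positive constant (a consequence of Stirling's formula, uniform over $|\arg w|<\frac{\pi}{2}$), so that for $k$ large $|\Gamma(w+\mu)/\Gamma(w+\nu)|\le C(m,L,\delta)(l_k^2+t^2)^{(L-\delta)+m/4}$. One then splits $\int_{\mathbb R}(l_k^2+t^2)^{(L-\delta)+m/4}|\Gamma(\nu+it)|\,dt$ at $|t|=l_k$: on $|t|\le l_k$ one uses $l_k^2+t^2\le 2l_k^2$ and $\int_{\mathbb R}|\Gamma(\nu+it)|\,dt<\infty$, while on $|t|>l_k$ the exponential decay $|\Gamma(\nu+it)|\le C|t|^{\nu-\frac12}e^{-\frac{\pi}{2}|t|}$ makes the contribution negligible compared with $l_k^{2(L-\delta)+m/2}$. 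Altogether $|J_k(x)|\le C(m,L,\delta)\,l_k^{2(L-\delta)+m/2}\,|x|^{c-L}$. This is the step I expect to be the main obstacle: the Gamma-ratio asymptotic must be applied with a constant that does not deteriorate as the large parameter $l_k$ is distributed between the real and imaginary parts of $w=l_k+it$, and one must keep track of the exact power of $l_k$.

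Finally, the two specialised bounds for $g_k$ follow from the right choice of $L$ and the asymptotics $\frac{\Gamma(b-a)}{\Gamma(a)}=\frac{\Gamma(l_k)}{\Gamma(l_k+m/2)}\sim l_k^{-m/2}$. For $k$ large take $L=2\delta$ (nudged off an integer if necessary): then $x^{a}|x|^{c-L}=|x|^{\delta-L}=x^{-\delta}$, so $\frac{\Gamma(b-a)}{\Gamma(a)}|J_k(x)|x^{a}\le C\,l_k^{-m/2}\cdot l_k^{2\delta+m/2}\cdot x^{-\delta}=C\,l_k^{2\delta}x^{-\delta}$; when $[L-\delta]=[\delta]=0$ the explicit sum in $g_k$ is empty, so $|g_k(x)|\le C(m,\delta)\,l_k^{2\delta}/x^{\delta}$, and for larger $\delta$ the finitely many explicit terms are seen to obey the same bound. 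For $k\le K$ take $L=\delta+1$: then $l_k$, $a$ and $b-a$ are all bounded in terms of $K$, the single explicit term $\frac{(a)_1(1+a-b)_1}{1!}x^{-1}$ is $O_K(x^{-1})$, and $\frac{\Gamma(b-a)}{\Gamma(a)}|J_k(x)|x^{a}\le C(K)\,|x|^{\delta-L}=C(K)\,x^{-1}$, whence $|g_k(x)|\le C(m,K,\delta)/x$.
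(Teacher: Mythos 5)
Your argument is correct and essentially reproduces the paper's proof: both start from the Barnes integral representation of $F[a,b;-x]$, shift the contour of integration left by $L$ to collect the residues at $s=-a-n$, $0\le n\le[L-\delta]$, identify the remaining line integral (after $s\mapsto s-L$) with $J_k$, and estimate $J_k$ along $s=c+it$ by Stirling-type bounds on the Gamma factors, arriving at the same power $l_k^{2(L-\delta)+\frac{m}{2}}|x|^{c-L}$ and then at the two stated bounds for $g_k$ by choosing $L=2\delta$ and $L=\delta+1$. Your packaging of the estimate via $\bigl|\Gamma(w+\mu)/\Gamma(w+\nu)\bigr|\le C|w|^{\mu-\nu}$ with the split at $|t|=l_k$ is only a cosmetic variant of the paper's explicit three-factor Stirling computation (indeed slightly more careful, since you keep $\Gamma(\delta-L+it)$ exact near $t=0$ and note that $L$ must be perturbed when $L-\delta$ is an integer so that no pole lies on the shifted contour), so no changes are needed.
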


\begin{proof}
Using the results in \cite{9} (see Page 36), we have
\begin{equation}
|z^s| = |z|^{\text{Re}(s)} e^{-\text{Im}(s)\arg{z}},
\end{equation}
and
\begin{equation}
\frac{\Gamma (a)}{\Gamma (b)} F[a, b ; -x] = \frac{1}{2\pi i}\int_{c-i\infty}^{c + i\infty}\frac{\Gamma(-s)\Gamma(a+s)}{\Gamma(b+s)} x^s ds,
\end{equation}
provided that $|\arg{x}| < \frac{1}{2} \pi$ and $b \neq 0, -1, -2, \cdots$. Now we will deduce the asymptotic expansion in $x$ for Kummer's function. Let us consider the integral
\[ I_k = \frac{1}{2\pi i} \int_{ADEF} \frac{\Gamma(-s)\Gamma(a+s)}{\Gamma(b+s)} x^s ds\]
round the rectangular contour
\[ A(c - iN),\ D(c + iM), \ E(c - L + iM),\  F(c - L - iN)\]
in the $s-$ space.
\begin{align*}
I_k &= \int_{AD} + \int_{DE} + \int_{EF} + \int_{FA} \\
&=I_{M,N} + J_4 + J_5 + J_6.
\end{align*}
As $M$ and $N \rightarrow \infty,$
\begin{align*}
&J_4 \to 0, \\
&J_6 \to 0, \\
&-J_5 \to J_k = \int_{c-i\infty}^{c+i\infty} \frac{\Gamma(L-s)\Gamma(a - L +s)}{2\pi i \Gamma(b-L+s)} x^{s-L}ds, \\
&I_{M,N} \to I_1 = \int_{c-i\infty}^{c+i\infty} \frac{\Gamma(-s)\Gamma(a + s)}{2\pi i \Gamma(b+s)} x^s ds \\
&= \frac{\Gamma(a)}{\Gamma(b)}F[a,b;-x],
\end{align*}
and
\[ I_k = -\sum_{n=0}^{[L]} \frac{\Gamma(a+n)}{\Gamma(b-a-n)} \frac{(-1)^{n-1}}{n!} x^{-a-n},\]
from the residues at the poles $s=-a, -a-1, \cdots, -a - [L - \delta]$. Then we have
\begin{align*}
F[a,b;x] &= \frac{\Gamma (b)}{\Gamma (a)}(I_k + J_k) \\
&= x^{-a} \frac{\Gamma(b)}{\Gamma(b-a)}(\sum_{n=0}^{[L-\delta]} \frac{(a)_n(1+a-b)_n}{n!} x^{-n} + \frac{\Gamma(b-a)}{\Gamma(a)}J_k(x)x^a).
\end{align*}
Assume $s= it + c$, then
\[ J_k = \int_{-\infty}^{+\infty} \frac{\Gamma(L-it-c)\Gamma(a-L+it+c)}{2\pi i \Gamma(b-L+it+c)}x^{it+c-L} idt.\]
By (2.7), we have
\[|x^{it+c-L}| = |x|^{c-L} e^{-(\arg{x})t}, \]
if $x$ is a positive real number, we have
\[|x^{it+c-L}| = x^{c-L} . \]
We know that the classical stirling's approximate formula for the Gamma-Function
in the form
\[ \Gamma(z) = \sqrt{2\pi} e^{-z} z^{z-\frac{1}{2}}(1 + O(\frac{1}{|z|})),\]
for $|\arg{z}| < \pi$ as $|z| \to \infty$. In absolute value:
\[ |\Gamma(x + iy)| = \sqrt{2\pi}e^{-x}(x^2 + y^2)^{\frac{x-\frac{1}{2}}{2}}e^{-y\arg{(x+iy)}}, \ as \ \sqrt{x^2 + y^2} \to +\infty. \]
If we set $a = l_k + \frac{m}{2}$, $b = 2l_k + \frac{m}{2}$ and $ c = -a + \delta,$ then\begin{align*}
&L-c = l_k + \frac{m}{2} + L - \delta, \\
&a +c -L = \delta -L,\\
&b + c - L = l_k + \delta - L.
\end{align*}
Thus
\begin{align*}
|\Gamma(L - it -c)| &\sim ((L -c)^2 + t^2)^{\frac{L-c-\frac{1}{2}}{2}} \exp({t \arg{(L-it-c)}}) e^{c-L}, \\
|\Gamma(a -L + it + c)| &\sim ((a + c - L)^2 + t^2)^{\frac{a-L-c-\frac{1}{2}}{2}} \\
&\exp(-t\arg{(a - L + c+ it)})\exp(-(a-L+c)),\\
|\Gamma(b -L + it + c)| &\sim ((b - L + c)^2 + t^2)^{\frac{b-L+ c-\frac{1}{2}}{2}} \\
&\exp(-t\arg{(b - L + c+ it)}) \exp(-(b-L+c)).
\end{align*}
If $k$ is large enough, we have
\begin{align*}
&\frac{((L-c)^2+t^2)^{\frac{L-c-\frac{1}{2}}{2}}((a+c -L)^2 +  t^2)^{\frac{a-L+c-\frac{1}{2}}{2}}}{((b - L +c)^2 + t^2 )^{\frac{b-L+c - \frac{1}{2}}{2}}} \\
&=\frac{((L-\delta + l_k + \frac{m}{2})^2 + t^2)^{\frac{l_k + \frac{m}{2} + L -\delta -\frac{1}{2}}{2}}((L-\delta)^2 + t^2)^{\frac{\delta - L - \frac{1}{2}}{2}}}{((l_k +\delta - L)^2 + t^2)^{\frac{l_k - L +\delta - \frac{1}{2}}{2}}} \\
&=(1 + \frac{(m + 4(L-\delta))(l_k - L + \delta) + (\frac{m}{2} + 2(L-\delta))^2}{t^2 + (l_k - L +\delta)^2})^{\frac{l_k - L + \delta - \frac{1}{2}}{2}} \\
&((l_k + \frac{m}{2} + L - \delta)^2 + t^2)^{\frac{2(L-\delta) + \frac{m}{2}}{2}}((L-\delta)^2 + t^2)^{\frac{\delta -L - \frac{1}{2}}{2}} \\
&\le \frac{\exp(\frac{m}{2} + 2(L - \delta))}{(L-\delta)^{L-\delta}}((l_k + \frac{m}{2} + L -\delta)^2 + t^2)^{\frac{2(L-\delta) + \frac{m}{2}}{2}}\\
&\le C(m,L,\delta)((l_k + \frac{m}{2} + L -\delta)^2 + t^2)^{\frac{2(L-\delta) + \frac{m}{2}}{2}}.
\end{align*}
For $t > 0$, we have
\begin{align*}
&\frac{\exp(t\arg(L-it-c))\exp(-t\arg(a-L+c+it))}{\exp(-t\arg(b-L+c+it)} \\
&=\exp\big(-t \arctan \frac{t}{L-c} - t(\pi + \arctan \frac{t}{a-L+c}) + t\arctan \frac{t}{b-L+c}\big) \\
\end{align*}
\begin{align*}
&=\exp(t \arctan \frac{\frac{t}{b-L+c} - \frac{t}{L-c} }{1+ \frac{t^2}{(b-L+c)(L-c)}})\exp(-t(\pi + \arctan \frac{t}{a-L+c}))\\
&\le \exp(t\arctan \frac{2L-2c -b}{t + \frac{(b-L+c)(L-c)}{t} })\exp(-\frac{\pi}{2}t) \\
&= \exp(t\arctan \frac{2L-2\delta +\frac{m}{2}}{t + \frac{(l_k + \delta -L)(l_k + \frac{m}{2} + L -\delta)}{t} })\exp(-\frac{\pi}{2}t) \\
&\le \exp(-\frac{\pi}{4}t).
\end{align*}
For $t < 0$, we similarly have
\begin{align*}
&\frac{\exp(t\arg(L-it-c)) \exp(-t \arg(a-L+c+it))}{\exp(-t \arg(b-L+c+it)} \\
&=\exp(-t \arctan \frac{t}{L-c} -t(\arctan \frac{t}{a-L+c} - \pi) + t\arctan \frac{t}{b-L+c} \\
&\le \exp(\frac{\pi}{4}t).
\end{align*}
And 
\begin{align*}
&\exp (c-L) \exp(-(a-L+c)) \exp(b-L+c)\\
&=\exp(-a+b-L+c) \\
&=\exp(-\frac{m}{2} - L + \delta).
\end{align*}
Then we can get
\begin{align*}
&|J_k| \le \int_{-\infty}^{+\infty}\left |\frac{\Gamma(L-it-c)\Gamma(a-L+it+c)}{2\pi i\Gamma(b-L+it+c)} x^{c-L}\right |dt \\
&=C(m,L,\delta) \int_{0}^{+\infty} \big((l_k + \frac{m}{2} + L -\delta)^2 + t^2\big)^{\frac{2(L-\delta)+\frac{m}{2}}{2} } \exp(-\frac{\pi}{4}t)dtx^{c-L} \\
&\le C(m,L,\delta)l_k^{2(L-\delta)+\frac{m}{2}} x^{c-L}.
\end{align*}
Since
\[ \frac{\Gamma(b-a)}{\Gamma(a)} \sim \frac{\sqrt{2\pi l_k}(\frac{l_k}{e})^{l_k}}{\sqrt{2\pi(l_k + \frac{m}{2})}(\frac{l_k + \frac{m}{2}}{e})^{l_k + \frac{m}{2}}},\]
then
\[ \left | \frac{\Gamma(b-a)}{\Gamma(a)} \right| \le \frac{C(m)}{(l_k + \frac{m}{2})^{\frac{m}{2}}}.\]
We set $L=2\delta$, then
\begin{align*}
&\left | \frac{\Gamma(b-a)}{\Gamma (a)} J_k(x) x^a \right| \\
& \le C(m,\delta) l_k^{2\delta} x^{-\delta}.
\end{align*}
Thus
\begin{align*}
|g_k(x)| &= \left | \sum_{n=1}^{[L-\delta]} \frac{a_n(1+a-b)_n}{n!}x^{-n} + \frac{\Gamma(b-a)}{\Gamma(a)}J_k(x) x^a \right | \\
&\le C(m,\delta) l_k^{2\delta}x^{-\delta}.
\end{align*}
If $k\le K$ for some $K>0$, $a,b,c$ are all bounded. We set $L=\delta+1$,  then
\[|g_k(x)| \le C(m,K,\delta)\frac{1}{x}.\]
\end{proof}
If $k\ge 1,$ we can use the asymptotic relation in Lemma 2.2, then

\begin{align*}
 f_k(r)& = c_k e^{\frac{r^2}{4}} r^{2l_k} F[l_k + \frac{m}{2}, 2l_k + \frac{m}{2}; -\frac{r^2}{4}]\\
&=c_k\frac{\Gamma(2l_k + \frac{m}{2})}{\Gamma(l_k)}e^{\frac{r^2}{4}}r^{-m}\Big(1+g_k(\frac{r^2}{4})\Big) \\
&= C_ke^{\frac{r^2}{4}}r^{-m}\Big(1+g_k(\frac{r^2}{4})\Big),
\end{align*}
where $C_k$ are constants.
If $k=0,$ the asymptotic relation of Kummer's function does not hold, we solves the equation directly.
\begin{equation}
(f_0)_{rr} + (\frac{m-1}{r} - \frac{r}{2})(f_0)_r = 0.
\end{equation}
Then
\begin{equation}
f_0 = c + C_0\int_0^r e^{\frac{r^2}{4}} r^{-m} dr
\end{equation}
is the general solution of (2.4).
Then we can easily get that:
\begin{lemma}
The general solution of $\Delta_g u=0$ on $\mathbb{R}^{m}\backslash \{0\}$ has the following form:
\[u(r,\theta)=c + C_0\int_0^r e^{\frac{r^2}{4}} r^{-m} dr+\sum_{k=1}^{\infty}C_ke^{\frac{r^2}{4}}r^{-m}\Big(1+g_k(\frac{r^2}{4})\Big)\varphi_k.\]
\end{lemma}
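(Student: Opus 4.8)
The plan is to run separation of variables on the sphere and then assemble the mode‑by‑mode information already obtained in Lemmas 2.1 and 2.2. Let $u$ solve $\Delta_g u = 0$ on $\mathbb{R}^m \setminus \{0\}$. For each fixed $r > 0$ the restriction $\theta \mapsto u(r,\theta)$ is smooth on the compact manifold $S^{m-1}$, hence expands as $u(r,\theta) = \sum_{k \ge 0} f_k(r) \varphi_k(\theta)$ with $f_k(r) = \langle u(r,\cdot), \varphi_k \rangle$, the series converging in every Sobolev norm on $S^{m-1}$. First I would check that each $f_k \in C^\infty((0,\infty))$ and that one may differentiate the spherical integral under the integral sign in $r$; this is where the hypothesis that $u$ is a genuine smooth solution on $\mathbb{R}^m \setminus \{0\}$ enters. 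Combining this with $\langle \Delta_\theta u, \varphi_k \rangle = \langle u, \Delta_\theta \varphi_k \rangle = -\lambda_k f_k$ and the identity $u_{rr} + \frac{m-1}{r} u_r + \frac{1}{r^2} \Delta_\theta u - \frac{r}{2} u_r = 0$ derived above shows that every $f_k$ solves the radial ODE (2.3).

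The second step is to solve (2.3) mode by mode. For $k = 0$ one has $\lambda_0 = 0$, so (2.3) reduces to (2.9); integrating the resulting first‑order equation for $(f_0)_r$ gives $(f_0)_r = \mathrm{const} \cdot r^{1-m} e^{r^2/4}$ and hence $f_0(r) = c + C_0 \int^r e^{s^2/4} s^{1-m}\, ds$. For $k \ge 1$, Lemma 2.1 identifies the relevant solution as $f_k(r) = c_k e^{r^2/4} r^{2l_k} F[l_k + \frac{m}{2}, 2l_k + \frac{m}{2}; -\frac{r^2}{4}]$, and the exact identity of Lemma 2.2, $F[a,b;-x] = x^{-a} \frac{\Gamma(b)}{\Gamma(b-a)} (1 + g_k(x))$ with $a = l_k + \frac{m}{2}$ and $b = 2l_k + \frac{m}{2}$, converts this — after absorbing $(\frac{r^2}{4})^{-a} r^{2l_k}$ into $r^{-m}$ and the Gamma factors into the constant — into $f_k(r) = C_k e^{r^2/4} r^{-m} (1 + g_k(\frac{r^2}{4}))$. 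Substituting into $u(r,\theta) = f_0(r) \varphi_0 + \sum_{k \ge 1} f_k(r) \varphi_k(\theta)$ and using that $\varphi_0$ is a constant yields the stated expansion.

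The step I expect to be the main obstacle is the bookkeeping at the two ends of the $r$‑interval. Near the origin, (2.3) has a two‑dimensional solution space for $k \ge 1$, and Lemma 2.1 discards the second solution (which behaves like $e^{r^2/4} r^{2-2l_k-m}$ as $r \to 0$) using the vanishing $f_k(0) = \langle u(0), \varphi_k \rangle = 0$; on $\mathbb{R}^m \setminus \{0\}$ this vanishing is not automatic, so I would either state the lemma for those solutions whose angular Fourier modes remain bounded at the origin (the radial mode $f_0$ being allowed to blow up, which is exactly what makes $C_0 \ne 0$ admissible), or keep these singular angular modes explicitly in the list. At infinity one must check that the assembled series, together with its $r$‑ and $\theta$‑derivatives, converges on $\mathbb{R}^m \setminus \{0\}$ and genuinely solves $\Delta_g u = 0$; here the estimates $|g_k(x)| \le C(m,\delta) l_k^{2\delta} x^{-\delta}$ and $|g_k(x)| \le C(m,K,\delta) / x$ from Lemma 2.2, together with Weyl's law $\lambda_k \sim k^{2/(m-1)}$ (so $l_k \sim k^{1/(m-1)}$) and the rapid decay of the coefficients $\langle u(r,\cdot), \varphi_k \rangle$ of a smooth function, are precisely what justifies the termwise operations.
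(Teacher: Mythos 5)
Your proposal follows essentially the same route as the paper: separate variables into spherical harmonics, solve the $k=0$ mode by direct integration of (2.9), invoke Lemma 2.1 for $k\ge 1$, and use the identity of Lemma 2.2 to rewrite each $f_k$ as $C_k e^{r^2/4} r^{-m}\bigl(1+g_k(\tfrac{r^2}{4})\bigr)$, then sum the modes. Your observation that discarding the second solution for $k\ge 1$ rests on the condition $f_k(0)=0$, which is not automatic for solutions merely defined on $\mathbb{R}^m\setminus\{0\}$, is a point the paper itself glosses over, so your treatment is if anything more careful than the paper's own argument.
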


\begin{lemma}
Assume $u\in H^{2n}(S^{m-1}),$ the fourier coefficients $|\langle u, \varphi_k\rangle|\le \frac{C}{\lambda_k^n},$ where $C$ is independent of $k.$
\end{lemma}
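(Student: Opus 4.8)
The plan is to exploit the spectral characterization of the Sobolev space $H^{2n}(S^{m-1})$ together with the fact that the $\varphi_k$ are eigenfunctions of $\Delta_\theta$. Recall that $\Delta_\theta$ is a self-adjoint elliptic operator of order $2$ on the closed manifold $S^{m-1}$, so $\Delta_\theta^{\,n}$ is elliptic of order $2n$ and maps $H^{2n}(S^{m-1})$ boundedly into $L^2(S^{m-1})$; in particular, for $u\in H^{2n}(S^{m-1})$ we have $\Delta_\theta^{\,n}u\in L^2(S^{m-1})$ with $\|\Delta_\theta^{\,n}u\|_{L^2}\le C_0\|u\|_{H^{2n}}$ for a constant $C_0=C_0(m,n)$.

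Next, using the self-adjointness of $\Delta_\theta$ (i.e.\ integrating by parts $2n$ times, which is legitimate for $u\in H^{2n}$ against the smooth eigenfunction $\varphi_k$) together with $\Delta_\theta\varphi_k=-\lambda_k\varphi_k$, I would compute
\[ \langle \Delta_\theta^{\,n} u,\varphi_k\rangle=\langle u,\Delta_\theta^{\,n}\varphi_k\rangle=(-\lambda_k)^n\langle u,\varphi_k\rangle, \]
so that $\lambda_k^{\,n}\,|\langle u,\varphi_k\rangle|=|\langle \Delta_\theta^{\,n}u,\varphi_k\rangle|$. Then, by the Cauchy–Schwarz inequality and the fact that $\{\varphi_k\}$ is orthonormal in $L^2(S^{m-1})$ (so $\|\varphi_k\|_{L^2}=1$),
\[ |\langle \Delta_\theta^{\,n}u,\varphi_k\rangle|\le \|\Delta_\theta^{\,n}u\|_{L^2}\,\|\varphi_k\|_{L^2}=\|\Delta_\theta^{\,n}u\|_{L^2}. \]
Combining the last two displays gives $|\langle u,\varphi_k\rangle|\le \|\Delta_\theta^{\,n}u\|_{L^2}\,\lambda_k^{-n}$ for every $k\ge 1$, and one takes $C=\|\Delta_\theta^{\,n}u\|_{L^2}\le C_0\|u\|_{H^{2n}}$, which is finite and independent of $k$.

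The only point requiring any care is the first step, namely that $u\in H^{2n}(S^{m-1})$ forces $\Delta_\theta^{\,n}u\in L^2$. If one adopts the spectral definition $\|u\|_{H^{2n}}^2=\sum_k(1+\lambda_k)^{2n}|\langle u,\varphi_k\rangle|^2$, this is immediate, and in fact the conclusion is then a one-line consequence: each summand is at most $\|u\|_{H^{2n}}^2$, so $|\langle u,\varphi_k\rangle|\le\|u\|_{H^{2n}}(1+\lambda_k)^{-n}\le\|u\|_{H^{2n}}\lambda_k^{-n}$. If instead $H^{2n}$ is defined via local charts and derivatives, one invokes elliptic regularity (Gårding's inequality) for $\Delta_\theta^{\,n}$ on the compact manifold $S^{m-1}$ to see that the chart-based norm and the spectral norm are equivalent. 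This is standard, so I would only sketch it and then proceed with the computation above.
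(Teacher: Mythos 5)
Your proof is correct and follows essentially the same route as the paper: move $\Delta_\theta^{\,n}$ onto $u$ by self-adjointness, use $\Delta_\theta\varphi_k=-\lambda_k\varphi_k$, and bound $|\langle \Delta_\theta^{\,n}u,\varphi_k\rangle|$ by Cauchy--Schwarz with $\|\varphi_k\|_{L^2}=1$. Your version is in fact slightly cleaner, since the paper's final inequality omits the square root and the explicit Cauchy--Schwarz step that you spell out.
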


\begin{proof}
\begin{align*}
\langle u,\varphi_k \rangle &=\int_{S^{m-1}}u\varphi_k\\
=&-\frac{1}{\lambda_k}\int_{S^{m-1}}u\Delta\varphi_k\\
=&-\frac{1}{\lambda_k}\int_{S^{m-1}}\Delta u\varphi_k\\
=&\frac{1}{\lambda_k^2}\int_{S^{m-1}}\Delta u\Delta \varphi_k\\
=&\frac{1}{\lambda_k^2}\int_{S^{m-1}}\Delta^2 u \varphi_k\\
&\vdots \\
&=(-\frac{1}{\lambda_k})^n\int_{S^{m-1}}\Delta^n u \varphi_k.
\end{align*}
\[|\langle u, \varphi_k\rangle| \le \frac{1}{\lambda_k^n}\int_{S^{m-1}}(\Delta^n u)^2\le \frac{C}{\lambda_k^n}.\]
\end{proof}
 \begin{proof}[Proof of Theorem \ref{thm:1.1}]
For any fixed $k\ge 1,$ note that 
\begin{align*}
\frac{1}{2}|C_k|e^{\frac{r_i^2}{4}}r_i^{-m}& \le |C_k||1+g_k(\frac{r_i^2}{4})|e^{\frac{r_i^2}{4}}r_i^{-m}\\
&=|\int_{S^{m-1}}u(r_i,\theta)\varphi_k|\\
&\le \big(\int_{S^{m-1}} ( u(r_i,\theta))^2\big)^{\frac{1}{2}} (\int_{S^{m-1}} \varphi_k^2)^{\frac{1}{2}}\\
&\le C\sqrt{\omega_n}e^{\frac{r_i^2}{4}}r_i^{-(m+\epsilon)},
\end{align*}
 which implies that 
\[C_k<Cr_i^{-\epsilon}.\]
Let $r_i \rightarrow +\infty$, we have
\[C_k\equiv 0.\]
If $k=0,$ 
\[|c+\frac{1}{2}C_0\int_0^{r_i} e^{\frac{r^2}{4}}r^{1-m}dr|\le C\sqrt{\omega_n}e^{\frac{r_i^2}{4}}r_i^{-(m+\epsilon)},\]
which implies that 
\[C_0\equiv 0.\]
It then follows that 
\[u\equiv c.\]
\end{proof}

 \begin{proof}[Proof of Theorem \ref{thm:1.6}]
For any $g(\theta)\in H^{[\frac{m}{2}]+2}(S^{m-1}),$ by Lemma 2.4, we have $g(\theta)=\sum_{k=0}^{\infty} \bar C_k\varphi_k$ and $|\bar C_k|\le \frac{C}{\lambda_k^{\frac{1}{2}[\frac{m}{2}]+1}}.$ By Lemma 2.1, for any $k\ge 1,$
\[\bar f_k= \bar C_ke^{\frac{r^2}{4}}r^{-m}\Big(1+g_k(\frac{r^2}{4})\Big) \]
is a solution of (2.3). \\
Similarly, 
\[\bar f_0(r)=\frac{1}{2}\bar C_0\int_0^re^{\frac{r^2}{4}}r^{-m}dr\] 
is a solution of (2.9). Assume $L=2\delta,$ by using the fact that $|\varphi_k|^2\le C(m)k^{m-2},$ we have 
\[|\bar f_k \varphi_k|\le Ce^{\frac{r^2}{4}}r^{-m}\frac{1}{k^{2}}\Big(1+C(m,K,\delta)\frac{k^{2\delta}}{r^{2\delta}}\Big)\]
for $k$ bounded by some $K>0$, where $C$ is independent of $k$. Assume $L=1+\delta,$ similarly we have
\[|\bar f_k \varphi_k|\le Ce^{\frac{r^2}{4}}r^{-m}\frac{1}{k^{2}}\Big(1+C(m,\delta)\frac{1}{r^{2}}\Big)\]
for $k$ finite, where $C$ is independent of $k$.
If we set $0<\delta<\frac{1}{2},$ the series $\sum_{k=0}^{\infty} \bar f_k\varphi_k$ is uniformly convergent on $\overline {B_{A}(0)} \setminus B_{\epsilon}(0)$ for any $\epsilon,A>0$, where $B_{\epsilon}(0)$ is an open ball. Assume $\bar u=\sum_{k=0}^{\infty} \bar f_k\varphi_k,$ then we have 
\begin{align*}
\Delta_g(\bar u)&=\bar u_{rr}+\frac{m-1}{r}\bar u_r+\frac{1}{r^2}\Delta_{\theta}\bar u-\frac{r}{2}\frac{\partial \bar u}{\partial r}\\
&=\sum_{k=1}^{\infty} \Big((\bar f_k)_{rr}+(\frac{m-1}{r}-\frac{r}{2})(\bar f_k)_r-\frac{\lambda_k\bar f_k}{r^2}\Big)\varphi_k+(\bar f_0)_{rr}+(\frac{m-1}{r}-\frac{r}{2})(\bar f_0)_r\\
&=0.
\end{align*}
Thus $\bar u$ is a solution of $\Delta_g u=0$ on $\mathbb{R}^m\backslash \{0\}.$\\
Since $\sum_{k=0}^{\infty} \frac{\bar f_k}{u_0}\varphi_k$ is uniformly convergent on $\mathbb{R}^m\setminus B_{\epsilon}(0)$, we can obtain that 
\begin{align*}
\lim\limits_{r\to+\infty}\frac{\bar u(r,\theta)}{u_0(r)}=\lim\limits_{r\to+\infty}\sum_{k=0}^{\infty} \frac{\bar f_k}{u_0}\varphi_k=\sum_{k=0}^{\infty}\lim\limits_{r\to + \infty}\frac{\bar f_k}{u_0}\varphi_k=\sum_{k=0}^{\infty} \bar C_k\varphi_k=g(\theta).
\end{align*}
Hence $\bar u$ is the quasi-harmonic function which satisfies that $\lim\limits_{r\to+\infty}\frac{\bar u(r,\theta)}{u_0(r)}=g(\theta)$ for the given function $g(\theta)\in H^{[\frac{m}{2}]+2}(S^{m-1}).$
Moreover, assume $\frac{\bar u(r,\theta)}{u_0(r)}$ and $\frac{ u(r,\theta)}{u_0(r)}$ are asymptotic to the same function $g(\theta)$, for $k\ge 1,$ we have
\begin{align*}
&C_k=\lim\limits_{r\to+\infty}\frac{ f_k}{u_0}=\lim\limits_{r\to+\infty}\langle \frac{ u(r,\theta)}{u_0(r)}, \varphi_k \rangle \\
&=\langle \lim\limits_{r\to+\infty}\frac{\bar u(r,\theta)}{u_0(r)}, \varphi_k \rangle=\lim\limits_{r\to + \infty}\frac{\bar f_k}{u_0}=\bar C_k.
\end{align*}
By (2.5), we also have 
\[C_0=\bar C_0.\]
Hence 
\begin{align*}
&u(r,\theta)-\bar u(r, \theta)\\
&=c-\bar c+(C_0-\bar C_0)\int^r_0e^{\frac{r^2}{4}}r^{1-m}dr +\sum_{k=0}^{\infty} (C_k-\bar C_k)e^{\frac{r^2}{4}}r^{-m}\Big(1+g_k(\frac{r^2}{4})\Big)\\
&=c-\bar c.
\end{align*}
\end{proof}

\hspace{0.4cm}
\section{The asymptotic behavior of eigenfunctions of the drift laplacian at infinity}
\setcounter{equation}{0}
\medskip
Assume that $u$ is an eigenfunction of $\Delta_h$ , i.e.,
\begin{equation}
\Delta_h u = -\lambda u.
\end{equation}
We rewrite it in the following form
\begin{equation}
\Delta u - (\nabla h, \Delta u) = -\lambda u,
\end{equation}
where $h = \frac{r^2}{4}$. 
Let $f_k(r)=\langle u(r,\cdot),\varphi_k\rangle$ for $k\ge 0,$ we have
\begin{equation}
(f_k)_{rr} + (\frac{m-1}{r} - \frac{r}{2})(f_k)_r = (\frac{\lambda_k }{r^2}-\lambda)f_k.
\end{equation}
Let 
\begin{align*}
&f_k(r) = w(z)z^{l_k} ,\\
&z = r^2 ,\\
&l_k = \frac{-(m-2) + \sqrt{(m-2)^2 + 4\lambda_k}}{4}.
\end{align*}
Then by (3.3), we have
\begin{equation}
zw''(z) + ((2l_k + \frac{m}{2}) - \frac{z}{4})w'(z) - \frac{l_k-\lambda}{4} w(z) = 0.
\end{equation}
Assume
\[ w(z) = e^{\frac{z}{4}}y(-\frac{z}{4}).\]
Then by (3.4), we have
\begin{equation}
\frac{z}{4} y''(-\frac{z}{4}) + (-\frac{z}{4} - (2l_k + \frac{m}{2}))y'(-\frac{z}{4}) + (l_k + \frac{m}{2}+\lambda)y(-\frac{z}{4}) = 0.
\end{equation}
Let $x = -\frac{z}{4}$, then we have
\begin{equation}
xy''(x) + ((2l_k + \frac{m}{2}) -x)y'(x) - (l_k + \frac{m}{2}+\lambda)y(x) = 0.
\end{equation}
Set $b = 2l_k + \frac{m}{2}, a = l_k + \frac{m}{2}+\lambda$.  The first  solution of (3.6) in terms of Kummer's series is 
\[y=c_kF[a,b;x].\]
The first solution of (3.3) is 
\[f_k(r)=c_ke^{\frac{r^2}{4}}r^{2l_k}F[ l_k + \frac{m}{2}+\lambda,2l_k + \frac{m}{2};-\frac{r^2}{4}].\]
If $k\ge 1,$ the second solution of (3.3) should be ruled out with the same reason as in the proof of Lemma 2.1.
We can use the same method as in the proof of Lemma 2.2 to get the asymptotic relation of the Kummer's functions.

\begin{lemma}
For any fixed $\delta>0$ and  $k$ satisfying that  $l_k-\lambda\neq-i$ for any nonnegative integer $i$, i.e., $\frac{1}{\Gamma(l_k-\lambda)}\neq 0,$ we set $a=l_k + \frac{m}{2}+\lambda, b=2l_k + \frac{m}{2}, c=-a+\delta$. Assume $L> \delta$ and $x$ is a positive real number, the following
asymptotic relation holds:
\[ F[a, b; -x] = x^{-a} \frac{\Gamma(b)}{\Gamma(b-a)}\Big(1 +\sum_{n=1}^{[L-\delta]} \frac{a_n(1+a-b)_n}{n!} x^{-n} + \frac{\Gamma(b-a)}{\Gamma(a)}J_k(x)x^a\Big), \]
 where $(a)_n=a(a+1)\cdots(a+n-1)$ and $J_k(x)= \int_{c-i\infty}^{c+i\infty} \frac{\Gamma(L-s)\Gamma(a - L +s)}{2\pi i \Gamma(b-L+s)} x^{s-L}ds$. Moreover, $\left| J_k(x)\right|\le C(m,L,\delta,\lambda)l_k^{2(L-\delta)+\frac{m}{2}} |x|^{c-L}$ for $k$ sufficient large. Assume $g_k(x)=\sum_{n=1}^{[L-\delta]} \frac{(a)_n(1+a-b)_n}{n!} x^{-n} + \frac{\Gamma(b-a)}{\Gamma(a)}J_k(x)x^a$, in particular, if $k$ is sufficient large, we set $L=2\delta$, then
\[ |g_k(x)| \le C(m,\delta,\lambda)\frac{l_k^{2\delta}}{x^{\delta}}\]  
and if $k\le K$ for some $K>0$, we set $L=\delta+1,$ then
\[ |g_k(x)| \le C(m,\delta,\lambda,K)\frac{1}{x}.\]  
\end{lemma}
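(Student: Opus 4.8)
The plan is to follow the proof of Lemma 2.2 almost verbatim; the only structural change is that the first Kummer parameter is now $a=l_k+\frac{m}{2}+\lambda$ instead of $l_k+\frac{m}{2}$, while $b=2l_k+\frac{m}{2}$ is unchanged, so that $b-a=l_k-\lambda$ and every constant acquires an extra dependence on $\lambda$. The hypothesis that $l_k-\lambda\neq-i$ for all nonnegative integers $i$ — equivalently $1/\Gamma(b-a)\neq 0$ — is precisely what guarantees that the leading coefficient $\Gamma(b)/\Gamma(b-a)$ of the asymptotic expansion is finite and nonzero (and $b=2l_k+\frac{m}{2}>0$ is automatic, so the Barnes representation applies).

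First I would start from the Mellin--Barnes representation
\[ \frac{\Gamma(a)}{\Gamma(b)}F[a,b;-x]=\frac{1}{2\pi i}\int_{c-i\infty}^{c+i\infty}\frac{\Gamma(-s)\Gamma(a+s)}{\Gamma(b+s)}x^{s}\,ds,\qquad |\arg x|<\tfrac{\pi}{2}, \]
with $c=-a+\delta$, and push the contour from $\mathrm{Re}(s)=c$ to $\mathrm{Re}(s)=c-L$ by integrating around the rectangle with vertices $A(c-iN)$, $D(c+iM)$, $E(c-L+iM)$, $F(c-L-iN)$ and letting $M,N\to\infty$. Exactly as before, the two horizontal sides vanish in the limit by Stirling's formula, the left side tends to $-J_k$, the right side reproduces $\frac{\Gamma(a)}{\Gamma(b)}F[a,b;-x]$, and the residue theorem collects the simple poles of $\Gamma(a+s)$ at $s=-a,-a-1,\dots,-a-[L-\delta]$, which (via $\Gamma(a+n)/\Gamma(b-a-n)$ and the reflection formula) assemble into $1+\sum_{n=1}^{[L-\delta]}\frac{(a)_n(1+a-b)_n}{n!}x^{-n}$ after factoring out $x^{-a}\Gamma(b)/\Gamma(b-a)$.

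The analytic core is the bound on $J_k$. Writing $s=c+it$ and noting $|x^{it+c-L}|=x^{c-L}$ for positive real $x$, I would substitute the absolute-value Stirling formula into the three Gamma factors, now with $L-c=l_k+\frac{m}{2}+\lambda+L-\delta$, $a+c-L=\delta-L$, and $b+c-L=l_k-\lambda+\delta-L$. Since $\lambda$ enters only as a bounded shift of $l_k$ in the first and third of these, the same manipulation as in Lemma 2.2 bounds the product of the Gamma-moduli by $C(m,L,\delta,\lambda)\big((l_k+\frac{m}{2}+\lambda+L-\delta)^2+t^2\big)^{(2(L-\delta)+m/2)/2}$ for $k$ large, and the arctangent bookkeeping again yields the decay factor $e^{-\pi|t|/4}$; integrating in $t$ gives $|J_k|\le C(m,L,\delta,\lambda)\,l_k^{2(L-\delta)+m/2}\,x^{c-L}$. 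Finally I would estimate the prefactor $\frac{\Gamma(b-a)}{\Gamma(a)}=\frac{\Gamma(l_k-\lambda)}{\Gamma(l_k+\frac{m}{2}+\lambda)}\sim C(m,\lambda)\,l_k^{-(m/2+2\lambda)}$ by Stirling; combining it with the $J_k$-bound and taking $L=2\delta$ yields $|g_k(x)|\le C(m,\delta,\lambda)\,l_k^{2\delta}x^{-\delta}$ for large $k$, while for $k\le K$ the parameters $a,b,c$ are all bounded and the choice $L=\delta+1$ makes both the single sum term and the $J_k$-term $O(1/x)$, i.e. $|g_k(x)|\le C(m,\delta,\lambda,K)/x$.

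The main obstacle — really the only point requiring genuine care beyond transcription — is to check that the $\lambda$-shift does not damage the two uniformities in $k$ that drive everything: the polynomial-in-$l_k$ control of the Gamma-moduli ratio and the $e^{-\pi|t|/4}$ decay in $t$ (both survive because $l_k\to\infty$ dominates the fixed shift by $\lambda$), and that the shifted prefactor $\Gamma(l_k-\lambda)/\Gamma(l_k+\frac{m}{2}+\lambda)$ still decays in $l_k$ fast enough to absorb $l_k^{2(L-\delta)+m/2}$, for which a constant $C(m,\delta,\lambda)$ suffices. One should also keep in mind that the excluded values $l_k-\lambda\in\{0,-1,-2,\dots\}$ are exactly those at which $\Gamma(b-a)$ has a pole and $F[a,b;-x]$ degenerates to a polynomial; these are the cases that must be handled separately when building eigenfunctions, and the present lemma deliberately sets them aside.
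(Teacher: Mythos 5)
Your proposal is correct and follows essentially the same route as the paper, which itself proves Lemma 3.1 only by remarking that the argument of Lemma 2.2 (Mellin--Barnes representation, contour shift with residues at $s=-a,\dots,-a-[L-\delta]$, Stirling bounds on the three Gamma factors with the $e^{-\pi|t|/4}$ decay, and the prefactor estimate for $\Gamma(b-a)/\Gamma(a)$) carries over with $a=l_k+\frac{m}{2}+\lambda$. Your extra checks — that the fixed $\lambda$-shift is absorbed into constants, that $\Gamma(l_k-\lambda)/\Gamma(l_k+\frac{m}{2}+\lambda)$ still decays fast enough in $l_k$, and that the excluded case $l_k-\lambda\in\{0,-1,-2,\dots\}$ is exactly where the expansion degenerates — are precisely the points the paper leaves implicit.
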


Note that  the points of nonpositive integer are the zero points of $\frac{1}{\Gamma(z)}.$ When $l_k-\lambda=-i$ for any nonnegative integer $i,$ the Kummer's functions $F[l_k+\frac{m}{2}+\lambda,2l_k+\frac{m}{2};-x]$ do not have the same asymptotic relation as in Lemma 3.1.  We will deal with the Kummer's series directly.
Assume $l_k-\lambda=-i$ for some nonnegative integer $0\le i\le \lambda.$ Then the first solution of (3.6) is
\begin{align*}
y(x)&=\sum_{n=0}^{\infty}\frac{(l_k+\frac{m}{2}+\lambda)\cdots(l_k+\frac{m}{2}+\lambda+n-1)}{(l_k+\frac{m}{2}+\lambda-i)\cdots(l_k+\frac{m}{2}+n+1+\lambda-i)n!}x^n\\
&=\sum_{n=0}^{\infty}\frac{(l_k+\frac{m}{2}+\lambda-i+1+n-1)\cdots(l_k+\frac{m}{2}+\lambda+n-1)}{(l_k+\frac{m}{2}+\lambda-i)\cdots(l_k+\frac{m}{2}+\lambda-1)n!}x^n
\end{align*}
Multiply $x^{l_k+\frac{m}{2}+\lambda-i-1}$ on both sides, we have
\begin{align*}
&y(x)x^{l_k+\frac{m}{2}+\lambda-i-1}=\\
&\sum_{n=0}^{\infty}\frac{(l_k+\frac{m}{2}+\lambda-i+1+n-1)\cdots(l_k+\frac{m}{2}+\lambda+n-1)}{(l_k+\frac{m}{2}+\lambda-i)\cdots(l_k+\frac{m}{2}+\lambda-1)n!}x^{n+l_k+\frac{m}{2}+\lambda-i-1}.
\end{align*}
Integrating from $0$ to $x$ for $i$ times, then we have 
\[\int_0^x\cdots \int_0^x y(x)x^{l_k+\frac{m}{2}+\lambda-i-1}=\sum_{n=0}^{\infty}\frac{x^{n+l_k+\frac{m}{2}+\lambda-1}}{(l_k+\frac{m}{2}+\lambda-i)\cdots(l_k+\frac{m}{2}+\lambda-1)n!}.\]
Thus 
\[\int_0^x\cdots \int_0^x y(x)x^{l_k+\frac{m}{2}+\lambda-i-1}=\frac{x^{l_k+\frac{m}{2}+\lambda-1}}{(l_k+\frac{m}{2}+\lambda-i)\cdots(l_k+\frac{m}{2}+\lambda-1)}\sum_{n=0}^{\infty}\frac{x^{n}}{n!}.\]
Differentiate the equation on both sides for $i$ times, we have
\[y(x)x^{l_k+\frac{m}{2}+\lambda-i-1}=e^x\Big(x^{l_k+\frac{m}{2}+\lambda-i-1}+\cdots+\frac{x^{l_k+\frac{m}{2}+\lambda-1}}{(l_k+\frac{m}{2}+\lambda-i)\cdots(l_k+\frac{m}{2}+\lambda-1)}\Big).\]
Then 
\[y(x)=e^x\Big(1+\cdots+\frac{x^{i}}{(l_k+\frac{m}{2}+\lambda-i)\cdots(l_k+\frac{m}{2}+\lambda-1)}\Big).\]
Thus 
\begin{align*}
(f_k)_1(r)&=e^{\frac{r^2}{4}}r^{2l_k}y(-\frac{r^2}{4})\\
&=c_k\Big(r^{2l_k}+\cdots+(\frac{1}{4})^{2i}\frac{r^{2\lambda}}{(l_k+\frac{m}{2}+\lambda-i)\cdots(l_k+\frac{m}{2}+\lambda-1)}\Big)\\
&=c_k q_k(r).
\end{align*}
Here $q_k(r)=r^{2l_k}+\cdots+(\frac{1}{4})^{2(l_k-\lambda)}\frac{r^{2\lambda}}{(2l_k+\frac{m}{2})\cdots(l_k+\frac{m}{2}+\lambda-1)}$ is a polynomial of degree $2\lambda.$
\begin{lemma}
Assume $A=\{0,m-2,m,m+2,\cdots,m+2\lambda\}$ and $B=\{m-2,m,m+2,\cdots,m+2[\lambda]\}.$ The general solution of $\Delta_h u=-\lambda u$ on $\mathbb{R}^m\backslash \{0\}$ has the following form:\\
If $2\lambda$ is not an integer, $u(r,\theta)=p(r)+\sum_{k=0}^{\infty}C_ke^{\frac{r^2}{4}}r^{-(2\lambda+m)}\big(1+g_k(\frac{r^2}{4})\big)\varphi_k$  with $p(r)=ce^{\frac{r^2}{4}}r^{-(m+2\lambda)}\int e^{-\frac{r^2}{4}}r^{m+4\lambda+1}(1+O(\frac{1}{r^2}))$ and $p(r)\sim r^{2\lambda}$;\\
If $\lambda$ is an integer, $u(r,\theta)=q(r,\theta)+C_0e^{\frac{r^2}{4}}r^{-(2\lambda+m)}\big(1+O(\frac{1}{r^2})\big)+\sum_{k\notin A}C_ke^{\frac{r^2}{4}}r^{-(2\lambda+m)}\big(1+g_k(\frac{r^2}{4})\big)\varphi_k$  with a polynomial $q(r,\theta)$ of degree $2\lambda$;\\
If $2\lambda$ is an integer but  $\lambda$ is not an integer, $u(r,\theta)=t(r,\theta)+\sum_{k\notin B}C_k e^{\frac{r^2}{4}}r^{-(2\lambda+m)}\big(1+g_k(\frac{r^2}{4})\big)\varphi_k$   with a polynomial $t(r,\theta)$ of degree $2\lambda.$ 
 \end{lemma}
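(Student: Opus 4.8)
The plan is to run the mode-by-mode analysis of Section~2 again, carrying the extra eigenvalue term. Writing $u(r,\theta)=\sum_{k\ge0}f_k(r)\varphi_k(\theta)$ with $f_k(r)=\langle u(r,\cdot),\varphi_k\rangle$, each $f_k$ solves the ODE (3.3), which under the substitutions $f_k(r)=w(z)z^{l_k}$, $z=r^2$, $w(z)=e^{z/4}y(-z/4)$ becomes the Kummer equation (3.6) with $b=2l_k+\frac m2$ and $a=l_k+\frac m2+\lambda$. Its first solution is the Kummer series $f_k(r)=c_k e^{r^2/4}r^{2l_k}F[a,b;-r^2/4]$, which is regular at the origin; for $k\ge1$ the second Frobenius solution behaves like $r^{2-2l_k-m}$ near $r=0$, hence blows up, and is ruled out by $f_k(0)=\langle u(0),\varphi_k\rangle=0$ exactly as in the proof of Lemma~2.1. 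For $k=0$ (where $l_0=0$, and the asymptotic expansion degenerates) both solutions of $(f_0)_{rr}+(\frac{m-1}{r}-\frac r2)(f_0)_r+\lambda f_0=0$ are retained.

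The decisive dichotomy is whether $l_k-\lambda\in\{0,-1,-2,\dots\}$, equivalently whether $1/\Gamma(l_k-\lambda)=0$. Since the eigenvalues of $S^{m-1}$ are $\ell(\ell+m-2)$ one has $l_k=\ell/2$, so $l_k-\lambda=-i$ forces $\ell=2(\lambda-i)$; this occurs for finitely many $k$ and only when $2\lambda\in\mathbb Z$, with $\ell$ running over $\{2\lambda,2\lambda-2,\dots\}$ down to $0$ or $1$. If $l_k-\lambda$ is not a nonpositive integer, Lemma~3.1 applies and yields $f_k(r)=C_k e^{r^2/4}r^{-(2\lambda+m)}\bigl(1+g_k(r^2/4)\bigr)$ together with the $k$-uniform bound on $g_k$. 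If $l_k-\lambda=-i$ with $i\ge0$ an integer, the Kummer transformation $F[a,b;-x]=e^{-x}F[b-a,b;x]$ shows $F[a,b;-r^2/4]=e^{-r^2/4}F[-i,b;r^2/4]$, so the exponential cancels and $f_k(r)=c_kq_k(r)$ is a polynomial of degree $2l_k+2i=2\lambda$ — this is precisely the $q_k$ written out just before the statement. For the mode $k=0$: when $\lambda\in\mathbb Z_{\ge0}$ (so $b-a=-\lambda\in\mathbb Z_{\le0}$) the regular solution is a degree-$2\lambda$ polynomial and the singular one is $C_0e^{r^2/4}r^{-(2\lambda+m)}(1+O(r^{-2}))$; when $\lambda\notin\mathbb Z_{\ge0}$, Lemma~3.1 still gives the singular solution $C_0e^{r^2/4}r^{-(2\lambda+m)}(1+g_0(r^2/4))$, and a reduction of order against it produces the second solution $p(r)=c\,e^{r^2/4}r^{-(m+2\lambda)}\int e^{-r^2/4}r^{m+4\lambda+1}(1+O(r^{-2}))\,dr\sim r^{2\lambda}$.

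Finally I would assemble the three cases. The resonant indices form a finite set — this is the set $A$ when $\lambda\in\mathbb Z$ (it then contains $k=0$) and the set $B$ when $2\lambda\in\mathbb Z$ but $\lambda\notin\mathbb Z$ (then $0\notin B$, since $l_0-\lambda=-\lambda$ is not an integer); summing the corresponding polynomial modes $c_kq_k(r)\varphi_k$ gives a single polynomial of degree $2\lambda$ in $(r,\theta)$ — $q(r,\theta)$, $t(r,\theta)$, or, when $2\lambda\notin\mathbb Z$ (only the $k=0$ non-polynomial resonance), the radial function $p(r)$. The non-resonant modes contribute $\sum C_k e^{r^2/4}r^{-(2\lambda+m)}(1+g_k(r^2/4))\varphi_k$, with the $k=0$ singular solution written separately exactly when $0$ lies in the resonant set. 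That every solution has this form is then immediate, because $\Delta_h$ is smooth and uniformly elliptic near every point of $\mathbb R^m\setminus\{0\}$, so the coefficients $f_k$ are classical solutions of (3.3); the converse — that for $\{C_k\}$ decaying fast enough the resulting series is genuinely a solution — is handled exactly as in the proof of Theorem~1.6 using the $g_k$-bounds of Lemma~3.1 together with the Fourier-decay estimate of Lemma~2.4. The main obstacle is the bookkeeping around the resonant case $l_k-\lambda\in\mathbb Z_{\le0}$: showing the Kummer function collapses to a polynomial of the exact degree $2\lambda$, correctly splitting the $k=0$ mode according to whether $\lambda$ is an integer, and verifying that the finitely many polynomial modes really do combine into one polynomial of degree $2\lambda$ with no leftover growth.
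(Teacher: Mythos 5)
Your proposal follows essentially the same route as the paper: Fourier decomposition into the modes $f_k$, reduction to the Kummer equation with $a=l_k+\frac m2+\lambda$, $b=2l_k+\frac m2$, the asymptotic relation of Lemma~3.1 for the non-resonant indices, reduction of order for the second $k=0$ solution, and the same three-case split according to whether $l_k-\lambda$ hits a nonpositive integer. The only deviation is cosmetic: you obtain the polynomial collapse of the resonant modes via the Kummer transformation $F[a,b;-x]=e^{-x}F[b-a,b;x]$, while the paper manipulates the Kummer series directly (multiplying by a power of $x$, integrating and differentiating $i$ times), both yielding the same $q_k$ of degree $2\lambda$.
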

\begin{proof}
Considering that $l_k = \frac{-(m-2) + \sqrt{(m-2)^2 + 4\lambda_k}}{4},$ we have
\begin{equation}
l_k-\lambda=-i \Longleftrightarrow k=-2(\lambda-i) \quad \text{or}  \quad k=m-2+2(\lambda-i).
\end{equation}
So we divide $\lambda$ into three cases.

Case 1. $2\lambda$ is not an integer.

By (3.7), we have $\frac{1}{\Gamma(l_k-\lambda)}\neq 0$ for any $k\ge 0.$ Then we can use Lemma 3.1 to get the asymptotic behavior of $f_k$ at infinity.
The first solution of (3.3) is
\begin{align*}
f_k(r)&=c_ke^{\frac{r^2}{4}}r^{2l_k}F[ l_k + \frac{m}{2}+\lambda,2l_k + \frac{m}{2};-\frac{r^2}{4}]\\
&=c_k\frac{\Gamma(2l_k+\frac{m}{2})}{\Gamma(l_k-\lambda)}e^{\frac{r^2}{4}}r^{-(2\lambda+m)}\big(1+g_k(\frac{r^2}{4})\big)\\
&=C_ke^{\frac{r^2}{4}}r^{-(2\lambda+m)}\big(1+g_k(\frac{r^2}{4})\big).
\end{align*}
The second solution of $f_0$ can be found by using the method of reduction of order. Let $(f_0)_2(r)=v(r)(f_0)_1(r)$ be the second linearly independent solution of (3.3), then
\[(f_0)_1(r)v''(r)+(2(f_0)'_1(r)+(\frac{m-1}{r} - \frac{r}{2})(f_0)_1)v'(r)=0.\]
Thus 
\begin{equation}
(f_0)_2(r)=c_0'(f_0)_1(r)\int\frac{e^{\frac{r^2}{4}}r^{1-m}}{((f_0)_1(r))^2}=ce^{\frac{r^2}{4}}r^{-(m+2\lambda)}\int e^{-\frac{r^2}{4}}r^{m+4\lambda+1}(1+O(\frac{1}{r^2}))=p(r)
\end{equation}
and  $p(r)\sim r^{2\lambda}.$
Thus 
\[u(r,\theta)=\sum_{k=0}^{\infty}f_k\varphi_k=p(r)+\sum_{k=0}^{\infty}C_ke^{\frac{r^2}{4}}r^{-(2\lambda+m)}\big(1+g_k(\frac{r^2}{4})\big)\varphi_k.\]

 Case 2. $\lambda$ is an integer.

$l_k-\lambda=-i $ for some integer $0\le i\le \lambda \Longleftrightarrow k\in A,$ where $A=\{0,m-2,m,m+2,\cdots,m+2\lambda\}.$
 
When $k=0,$ which implies that $i=\lambda,$ we have 
\[(f_0)_1(r)=c_0q_{2\lambda}(r).\] 
The second linearly independent solution can be found by using the method of reduction of order.
\begin{equation}
(f_0)_2(r)=c_0'(f_0)_1(r)\int\frac{e^{\frac{r^2}{4}}r^{1-m}}{((f_0)_1(r))^2}=l(r),
\end{equation}
where $l(r)=C_0e^{\frac{r^2}{4}}r^{-(2\lambda+m)}\big(1+O(\frac{1}{r^2})\big).$
When $k\ge 1,$ $f_k(r)=(f_k)_1(r),$ where $(f_k)_1(r)$ is the first solution of $f_k.$
Thus we have 
\begin{align*}
u(r,\theta)&=\sum_{k=0}^{\infty}f_k\varphi_k\\
&=\sum_{k\in A}f_k\varphi_k+\sum_{k\notin A}f_k\varphi_k\\
&=\sum_{k\in A}(f_k)_1\varphi_k+(f_0)_2+\sum_{k\notin A}(f_k)_1\varphi_k\\
&=\sum_{k\in A}c_ke^{\frac{r^2}{4}}r^{2l_k}F[ l_k + \frac{m}{2}+\lambda,2l_k + \frac{m}{2};-\frac{r^2}{4}]\varphi_k+l(r)\\
&+\sum_{k\notin A}c_ke^{\frac{r^2}{4}}r^{2l_k}F[ l_k + \frac{m}{2}+\lambda,2l_k + \frac{m}{2};-\frac{r^2}{4}]\varphi_k\\
&=\sum_{k\in A}c_{k}q_{k}   \varphi_{k}+l(r)+\sum_{k\notin A}C_ke^{\frac{r^2}{4}}r^{-(2\lambda+m)}\big(1+g_k(\frac{r^2}{4})\big)
\varphi_k\\
&=q(r,\theta)+l(r)+\sum_{k\notin A}C_ke^{\frac{r^2}{4}}r^{-(2\lambda+m)}\big(1+g_k(\frac{r^2}{4})\big)
\varphi_k,
\end{align*}
where $q(r,\theta)=\sum_{k\in A}c_{k}q_{k}\varphi_{k}$ is a polynomial of degree $2\lambda.$

Case 3. $2\lambda$ is an integer but  $\lambda$ is not an integer.

$l_k-\lambda=-i$ for some integer $0\le i\le [\lambda] \Longleftrightarrow k\in B,$ where $B=\{m-2,m,m+2,\cdots,m+2[\lambda]\}.$
 
Since $0\notin B, \frac{1}{\Gamma(l_k-\lambda)}\neq 0.$ By Lemma 3.1,  we have 
\begin{align*}
(f_0)_1(r)&=c_0e^{\frac{r^2}{4}}r^{2l_k}F[ l_k + \frac{m}{2}+\lambda,2l_k + \frac{m}{2};-\frac{r^2}{4}]\\
&=C_0e^{\frac{r^2}{4}}r^{-(2\lambda+m)}\big(1+g_0(\frac{r^2}{4})\big).
\end{align*}
The second linearly independent solution can be found by using the method of reduction of order.
\[(f_0)_2(r)=p(r),\]
where $p(r)\sim r^{2\lambda}.$\\
When $k\ge 1,$ $f_k(r)=(f_k)_1(r),$ where $(f_k)_1(r)$ is the first solution of $f_k.$
Thus we have 
\begin{align*}
u(r,\theta)&=\sum_{k=0}^{\infty}f_k\varphi_k\\
&=\sum_{k\in B}f_k\varphi_k+\sum_{k\notin B}f_k\varphi_k\\
&=\sum_{k\in B}(f_k)_1\varphi_k+(f_0)_2+\sum_{k\notin B}(f_k)_1\varphi_k\\
&=\sum_{k\in B}c_ke^{\frac{r^2}{4}}r^{2l_k}F[ l_k + \frac{m}{2}+\lambda,2l_k + \frac{m}{2};-\frac{r^2}{4}]\varphi_k+p(r)\\
&+\sum_{k\notin B}c_ke^{\frac{r^2}{4}}r^{2l_k}F[ l_k + \frac{m}{2}+\lambda,2l_k + \frac{m}{2};-\frac{r^2}{4}]\varphi_k\\
&=\sum_{k\in B}c_{k}q_{k}   \varphi_{k}+p(r)+\sum_{k\notin B}C_ke^{\frac{r^2}{4}}r^{2\lambda+m}\big(1+g_k(\frac{r^2}{4})\big)\varphi_k\\
&=\sum_{k\in B}c_{k}q_{k}   \varphi_{k}+p(r)+\sum_{k\notin B}C_ke^{\frac{r^2}{4}}r^{2\lambda+m}\big(1+g_k(\frac{r^2}{4})\big)
\varphi_k\\
&=s(r,\theta)+p(r)+\sum_{k\notin B}C_ke^{\frac{r^2}{4}}r^{2\lambda+m}\big(1+g_k(\frac{r^2}{4})\big)
\varphi_k,
\end{align*}
where $s(r,\theta)=\sum_{k\in B}c_{k}q_{k}   \varphi_{k}$ is a polynomial of degree $2\lambda.$
If we denote $t(r,\theta)=s(r,\theta)+p(r),$ by Lemma 1.2 in \cite{5}, we know that $ t(r,\theta)$ is an polynomial of degree $2\lambda.$  Then 
\[u(r,\theta)=t(r,\theta)+\sum_{k\notin B}C_ke^{\frac{r^2}{4}}r^{2\lambda+m}\big(1+g_k(\frac{r^2}{4})\big)\varphi_k.\]  
\end{proof}

 \begin{proof}[Proof of Theorem \ref{thm:1.3}]
We divide $\lambda$ into three cases:

Case1. $2\lambda$ is not an integer.

For any fixed $k\ge 0,$ note that 
\begin{align*}
\frac{1}{2}|C_k|e^{\frac{r_i^2}{4}}r_i^{-(m+2\lambda)}& \le |C_k||1+g_k(\frac{r_i^2}{4})|e^{\frac{r_i^2}{4}}r_i^{-(m+2\lambda)}\\
&=|\int_{S^{m-1}}u(r_i,\theta)\varphi_k|\\
&\le (\int_{S^{m-1}}( u(r_i,\theta))^2)^{\frac{1}{2}}(\int_{S^{m-1}} \varphi_k^2)^{\frac{1}{2}}\\
&\le C\sqrt{\omega_n}e^{\frac{r_i^2}{4}}r_i^{-(m+2\lambda+\epsilon)},
\end{align*}
 which implies that 
\[C_k<Cr_i^{-\epsilon}.\]
Let $r_i\rightarrow+\infty,$  then
\[C_k\equiv 0.\]
It is clear that 
\[u(r,\theta)\equiv p(r).\]

Case 2. $\lambda$ is an integer.

Similarly, we can obtain that
\[u(r,\theta)\equiv q(r,\theta).\]

Case 3. $2\lambda$ is an integer but  $\lambda$ is not an integer.

Similarly, we can obtain that
\[u(r,\theta)\equiv t(r,\theta).\]
\end{proof}

 \begin{proof}[Proof of Theorem \ref{thm:1.7}]
We divide $\lambda$ into three cases:

Case 1. $2\lambda$ is not an integer.

For any $g(\theta)\in H^{[\frac{m}{2}]+2}(S^{m-1}),$ by Lemma 2.4, we have $g(\theta)=\sum_{k=0}^{\infty} \bar C_k\varphi_k$ and $|\bar C_k|\le \frac{C}{\lambda_k^{\frac{1}{2}[\frac{m}{2}]+1}}.$ By Lemma 3.2, for any $k\ge 0,$
\[\bar f_k= \bar C_ke^{\frac{r^2}{4}}r^{-(2\lambda+m)}\Big(1+g_k(\frac{r^2}{4})\Big) \]
is a solution of (3.3). 
Assume  $L=2\delta,$ by using the fact that $|\varphi_k|^2\le C(m)k^{m-2},$ we have
\[|\bar f_k \varphi_k|\le Ce^{\frac{r^2}{4}}r^{-(m+2\lambda)}\frac{1}{k^{2}}\Big(1+C(m,\delta,\lambda)\frac{k^{2\delta}}{r^{2\delta}}\Big)\] 
for $k$ sufficient large, where $C$ is independent of $k$. Assume $L=1+\delta,$ similarly we have
\[|\bar f_k \varphi_k|\le Ce^{\frac{r^2}{4}}r^{-(m+2\lambda)}\frac{1}{k^{2}}\Big(1+C(m,\delta,K,\lambda)\frac{1}{r^2}\Big)\]
when $k\le K$ for some $K>0$, where $C$ is independent of $k$.
If we set $0<\delta<\frac{1}{2},$ the series $\sum_{k=0}^{\infty} \bar f_k\varphi_k$ is uniformly convergent on $\overline { B_A(0)}\setminus B_{\epsilon}(0)$ for any $\epsilon,A>0.$ Assume $\bar u=\sum_{k=0}^{\infty} \bar f_k\varphi_k,$ then we have 
\begin{align*}
\Delta_g(\bar u)&=\bar u_{r r}+\frac{m-1}{r}\bar u_r+\frac{1}{r^2}\Delta_{\theta}\bar u-\frac{r}{2}\frac{\partial \bar u}{\partial r}\\
&=\sum_{k=1}^{\infty} \Big((\bar f_k)_{rr}+(\frac{m-1}{r}-\frac{r}{2})(\bar f_k)_r-\frac{\lambda_k\bar f_k}{r^2}\Big)\varphi_k+(\bar f_0)_{r r}+(\frac{m-1}{r}-\frac{r}{2})(\bar f_0)_r\\
&=-\lambda\sum_{k=0}^{\infty}\bar f_k\varphi_k\\
&=-\lambda \bar u
\end{align*}
Thus $\bar u$ is a solution of $\Delta_g u=-\lambda u$ on $\mathbb{R}^m\backslash \{0\}.$\\
Since $\sum_{k=0}^{\infty} \frac{\bar f_k}{u_0}\varphi_k$ is uniformly convergent on $\mathbb{R}^m\setminus B_{\epsilon}(0)$ for any $\epsilon>0$, then we can obtain that 
\begin{align*}
\lim\limits_{r\to+\infty}\frac{\bar u(r,\theta)}{u_0(r)}=\lim\limits_{r\to+\infty}\sum_{k=0}^{\infty} \frac{\bar f_k}{u_0}\varphi_k=\sum_{k=0}^{\infty} \lim\limits_{r\to + \infty}\frac{\bar f_k}{u_0}\varphi_k=g(\theta).
\end{align*}
Hence $\bar u$ is the solution of $\Delta_h u=-\lambda u$ which satisfies that $\lim\limits_{r\to+\infty}\frac{\bar u(r,\theta)}{u_0(r)}=g(\theta)$ for any given function $g(\theta)\in H^{[\frac{m}{2}]+2}(S^{m-1}).$
Moreover, assume $\frac{\bar u(r,\theta)}{u_0(r)}$ and $\frac{ u(r,\theta)}{u_0(r)}$ are asymptotic to the same function $g(\theta)$, then 
\begin{align*}
&C_k=\lim\limits_{r\to+\infty} \frac{ f_k}{u_0}=\langle\lim\limits_{r\to+\infty} \frac{ u(r,\theta)}{u_0(r)}, \varphi_k \rangle \\
&=\langle \lim\limits_{r\to+\infty}\frac{\bar u(r,\theta)}{u_0(r)}, \varphi_k \rangle=\lim\limits_{r\to + \infty}\frac{\bar f_k}{u_0}=\bar C_k.
\end{align*}
Hence 
\begin{align*}
&u(r,\theta)-\bar u(r, \theta)\\
&=p(r)-\bar p(r)+\sum_{k=0}^{\infty} (C_k-\bar C_k)e^{\frac{r^2}{4}}r^{-(2\lambda+m)}\Big(1+g_k(\frac{r^2}{4})\Big)\varphi_k\\
&=p(r)-\bar p(r).
\end{align*}
 Case 2. $\lambda$ is an integer.

For any $g(\theta)\in H^{[\frac{m}{2}]+2}(S^{m-1})$ satisfying that $\langle g,\varphi_k\rangle=0$ when $k\in \{0,m-2,m,m+2,\cdots,m+2\lambda\}=A,$  by Lemma 2.4, we have $g(\theta)=\sum_{k\notin A}\bar C_k\varphi_k$ and $|\bar C_k|\le \frac{C}{\lambda_k^{\frac{1}{2}[\frac{m}{2}]+1}}.$\\ 
Set \[u(r,\theta)=\sum_{k\notin A}\bar C_ke^{\frac{r^2}{4}}r^{-(2\lambda+m)}\Big(1+g_k(\frac{r^2}{4})\Big)\varphi_k+\bar C_0l(r).\]
Then $\bar u$ is a solution of $\Delta_h=-\lambda u$ which satisfies that $\lim\limits_{r\to+\infty}\frac{\bar u(r,\theta)}{u_0(r)}=g(\theta).$

Case 3. $2\lambda$ is an integer but  $\lambda$ is not an integer.

For any $g(\theta)\in H^{[\frac{m}{2}]+2}(S^{m-1})$ satisfying that $\langle g,\varphi_k\rangle=0$ when $k\in \{m-2,m,m+2,\cdots,m+2[\lambda]\}=B,$ by Lemma 2.4, we have $g(\theta)=\sum_{k\notin B}\bar C_k\varphi_k$ and $|\bar C_k|\le \frac{C}{\lambda_k^{\frac{1}{2}[\frac{m}{2}]+1}}.$\\ 
Set \[u(r,\theta)=\sum_{k\notin B}\bar C_ke^{\frac{r^2}{4}}r^{-(2\lambda+m)}\Big(1+g_k(\frac{r^2}{4})\Big)\varphi_k.\]
Then $\bar u$ is a solution of $\Delta_h u =-\lambda u$ which satisfies that $\lim\limits_{r\to+\infty}\frac{\bar u(r,\theta)}{u_0(r)}=g(\theta).$
\end{proof}



\begin{thebibliography}{99}
\bibitem{11}
 Colding, T.H., Minicozzi II, W.P., {\em Smooth compactness of self-shrinkers}, Comment. Math. Helv. {\bf 87}(2012), 463-475.
\bibitem{7} Chen, M.,  Li, J.Y., {\em Eigenfunctions of quasi-Laplacian}, Nonlinear Anal. {\bf 187}(2019), 205-214.
\bibitem{5}
 Colding, T.H.,  Minicozzi II, W.P. , {\em Analytical properties for degenerate equations}. n: Chen, J., Lu, P., Lu, Z., Zhang, Z. (eds.) Geometric Analysis: In honor of Gang Tian's birthday. Progress in Mathematics series. Birkhauser. arXiv: 1804.08999.
\bibitem{6}
 Colding, T.H.,  Minicozzi II, W.P. , {\em Sharp frequency bounds for eigenfunctions of the Ornstein- Uhlenbeck operator}, Calc. Var. {\bf 57}(2018), 205-214.
\bibitem{10}
Colding,T.H.,  Minicozzi II, W.P. , {\em Harmonic functions of polynomial growth}, JDG {\bf 45}(1997), 1-77.
\bibitem{1}
Lin, F.H., Wang, C.Y., {\em Harmonic and quasi-harmonic spheres}, Comm. Anal. Geom. {\bf 7}(1999), 397-429.
\bibitem{12}
Michael, S., {on the evolution of harmonic maps in higher dimensions}, JDG{\bf 26}(1988) 485-502.
\bibitem{2}
Ding, W.Y., Zhao, Y.Q., {\em Elliptic equations strongly degenerate at a point}, Nonlinear Anal. {\bf 65}(2006), 1624-1632.
\bibitem{3} 
Cheng, S. Y. , Yau, S.-T., {\em Differential equations on Riemannian manifolds and their geometric applications}, Comm. Pure Appl. Math. {\bf 28}(1975), 333-354.
\bibitem{4}
 Li, J.Y. and Wang, M., {\em Liouville theorems for self-similar solutions of heat flows}, J. Eur. Math. Soc.(JEMS) {\bf 11}(2009), no. 1, 207-221. MR 2471137
\bibitem{9}
Slater,L.J.,  {\em Confluent hypergeometric functions}, Cambridge University Press,  London, 1960.
\end{thebibliography}
\end{document}